\documentclass[11pt,a4paper]{article}

\usepackage{epsf,epsfig,amsfonts,amsgen,amsmath,amstext,amsbsy,amsopn,amsthm,cases,listings,color
}
\usepackage{ebezier,eepic}
\usepackage{color}
\usepackage{multirow}
\usepackage{epstopdf}
\usepackage{graphicx}   
\usepackage{pgf,tikz}
\usepackage{mathrsfs}
\usepackage[marginal]{footmisc}
\usepackage{enumitem}
\usepackage[titletoc]{appendix}
\usepackage{booktabs}
\usepackage{url}
\usepackage{mathtools}

\usepackage{pgfplots}
\usepackage{authblk}
\usepackage{amssymb}

\usepackage{wasysym}

\usepackage{empheq}

\usepackage{dsfont}

\usepackage{tikz}
\usepackage{longtable}

\usepackage{subfigure}
\pgfplotsset{compat=1.18}
\usepackage{mathrsfs}
\usepackage{wasysym} 
\usetikzlibrary{arrows}
\usepackage{aligned-overset}
\usepackage{bm}
\usepackage{bbm}
\usepackage[backref=page]{hyperref}

\usepackage{float}

\allowdisplaybreaks[1]

\definecolor{uuuuuu}{rgb}{0.27,0.27,0.27}
\definecolor{sqsqsq}{rgb}{0.1255,0.1255,0.1255}

\setlength{\textwidth}{150mm} \setlength{\oddsidemargin}{7mm}
\setlength{\evensidemargin}{7mm} \setlength{\topmargin}{-5mm}
\setlength{\textheight}{245mm} \topmargin -18mm

\newtheorem{definition}{Definition} [section]
\newtheorem{theorem}[definition]{Theorem}
\newtheorem{lemma}[definition]{Lemma}
\newtheorem{proposition}[definition]{Proposition}

\newtheorem{claim}[definition]{Claim}
\newtheorem{problem}[definition]{Problem}

\newtheorem{fact}[definition]{Fact}


\setlength{\parindent}{0pt}
\parskip=8pt
\begin{document}
\title{\bf\Large On a hypergraph Mantel theorem}
\date{\today}
\author[ ]{Xizhi Liu\thanks{Research supported by ERC Advanced Grant 101020255. Email: \texttt{xizhi.liu.ac@gmail.com}}}
\affil[ ]{Mathematics Institute and DIMAP,
            University of Warwick, 
            Coventry, UK}
\maketitle
\begin{abstract}
An $r$-graph is a triangle if there exists a positive integer $i \le \lceil r/2 \rceil$ such that it is isomorphic to the following $r$-graph with three edges: 
\begin{align*}
    \left\{\{1, \ldots, r\},~\{1, \ldots, i, r+1, \ldots, 2r-i\},~\{i+1, \ldots, r, r+1, 2r-i+1, \ldots,2r-1\}\right\}. 
\end{align*}
We prove an Andr{\'a}sfai--Erd\H{o}s--S\'{o}s-type stability theorem for triangle-free $r$-graphs. In particular, it implies that for large $n$, the unique extremal triangle-free construction on $n$ vertices is the balanced complete $r$-partite $r$-graph. The latter result answers a question by Mubayi and Pikhurko~{\cite[Problem~20]{MPS11}} on weakly triangle-free $r$-graphs for large $n$ in a stronger form. 
The proof combines the recently introduced entropic technique of Chao--Yu~\cite{CY24} with the framework developed in~\cite{LMR23unif,HLZ24}. 
\end{abstract}
\section{Introduction}
Given an integer $r\ge 2$, an \textbf{$r$-uniform hypergraph} (henceforth \textbf{$r$-graph}) $\mathcal{H}$ is a collection of $r$-subsets of some finite set $V$.
We identify a hypergraph $\mathcal{H}$ with its edge set and use $V(\mathcal{H})$ to denote its vertex set. 
The size of $V(\mathcal{H})$ is denoted by $v(\mathcal{H})$.
For every vertex $v \in V(\mathcal{H})$, the \textbf{degree} $d_{\mathcal{H}}(v)$ of $v$ in $\mathcal{H}$ is the number of edges containing $v$. 
We use $\delta(\mathcal{H})$, $\Delta(\mathcal{H})$, and $d(\mathcal{H})$ to denote the \textbf{minimum degree}, the \textbf{maximum degree}, and the \textbf{average degree} of $\mathcal{H}$, respectively.  

Given a family $\mathcal{F}$ of $r$-graphs, we say $\mathcal{H}$ is \textbf{$\mathcal{F}$-free}
if it does not contain any member of $\mathcal{F}$ as a subgraph.
The \textbf{Tur\'{a}n number} $\mathrm{ex}(n, \mathcal{F})$ of $\mathcal{F}$ is the maximum number of edges in an $\mathcal{F}$-free $r$-graph on $n$ vertices. 
The \textbf{Tur\'{a}n density} of $\mathcal{F}$ is defined as $\pi(\mathcal{F})\coloneq \lim_{n\to\infty}\mathrm{ex}(n,\mathcal{F})/{n\choose r}$. 
A straightforward averaging argument (see e.g.~\cite{KNS64}) shows that $\mathrm{ex}(n,\mathcal{F})/{n\choose r}$ is non-increasing in $n$, and hence, the limit $\pi(\mathcal{F})$ is well-defined.
We say $\mathcal{F}$ is \textbf{nondegenerate} if $\pi(\mathcal{F}) > 0$.

For $r=2$, the value $\pi(\mathcal{F})$ is well understood thanks to the classical work of Erd\H{o}s--Stone~\cite{ES46} (see also~\cite{ES66}), which extends Tur\'{a}n's seminal theorem from~\cite{TU41}.
For $r \ge 3$, determining $\pi(\mathcal{F})$ is notoriously difficult in general, despite significant effort devoted to this area. 
For results up to~2011, we refer the reader to the excellent survey by Keevash~\cite{Keevash11}.


Given integers $r > i \ge 1$, let $\mathbb{T}_{r,i}$ denote the $r$-graph whose vertex set is $[2r+1]$ and whose edge set is
\begin{align*}
    \left\{\{1, \ldots, r\},~\{1, \ldots, i, r+1, \ldots, 2r-i\},~\{i+1, \ldots, r, r+1, 2r-i+1, \ldots,2r-1\}\right\}. 
\end{align*}
Observe that $\mathbb{T}_{r,i}$ is isomorphic to $\mathbb{T}_{r,r-i}$ for every $i \in [r-1]$.  
Let 
\begin{align*}
    \Delta_{r} \coloneqq \left\{\mathbb{T}_{r,i} \colon 1 \le i \le \lceil r/2 \rceil\right\}.
\end{align*}
For convenience, let $T^{r}(n)$ denote the balanced complete $r$-partite $r$-graph on $[n]$. Prior to Tur\'{a}n's theorem, Mantel~\cite{Mantel07} proved that the maximum size of a $\Delta_{2}$-free graph on $n$ vertices is uniquely achieved by $T^{2}(n)$.
To extend Mantel's theorem to hypergraphs, Katona proposed in the 1960s the problem of determining the value of $\mathrm{ex}(n, \{\mathbb{T}_{3,1}, K_{4}^{3-}\})$, where $K_{4}^{3-}$ denotes the $3$-graph obtained by removing one edge from the complete $4$-vertex $3$-graph $K_{4}^{3}$.
This problem was answered later by Bollob\'{a}s~\cite{BO74}, who proved that the unique extremal construction for $\{\mathbb{T}_{3,1}, K_{4}^{3-}\}$ is $T^{3}(n)$. 
Later, Frankl--F\"{u}redi~\cite{FF83F5} strengthened Bollob\'{a}s's theorem by showing that the same conclusion holds for $\mathbb{T}_{3,1}$ when $n \ge 3000$, thereby establishing the first tight bound for the Tur\'{a}n number of a single hypergraph. 
Their results were further refined in subsequent works, such as~\cite{KM04,Goldwasser,BBHLM16,LM2021,Liu21Cancel,LMR23unif,Liu24canSte,LRW24a,LRW24b}. 

Let $\mathbb{C}_{r}$ denote the collection of $r$-graphs consisting of three edges $A, B, C$ such that the symmetric difference $A \triangle B$ of $A$ and $B$ is contained in $C$. 
Note that $\mathbb{C}_{2} = \Delta_{2} = \{K_3\}$ and $\Delta_{3} \subseteq \mathbb{C}_{3} = \{\mathbb{T}_{3,1}, K_{4}^{3-}\}$.
Motivated by the Mantel Theorem and Bollob\'{a}s's theorem on $\mathrm{ex}(n,\mathbb{C}_{3})$, 
Bollob\'{a}s~\cite{BO74} conjectured that $T^{r}(n)$ is the unique extremal construction for $\mathbb{C}_{r}$ for all $r \ge 4$. 
His conjecture was proved for $r=4$ in a stronger form when $n$ is large by Pikhurko~\cite{PI08} (see~\cite{Sido94Tri} for an asymptotic version). 
However, constructions by Shearer~\cite{She96} show that this conjecture is false for $r \ge 10$. 
Amending Bollob\'{a}s's conjecture, Mubayi and Pikhurko (see~{\cite[Problem~20]{MPS11}}) introduced the weakly triangle-free $r$-graphs and posed the following question: 

An $r$-graph is \textbf{weakly triangle-free} if it does not contain three edges $A, B, C$ such that $C$ contains  strictly more than half of vertices from the symmetric difference $A \triangle B$. 
\begin{problem}[Mubayi--Pikhurko~{\cite[Problem~20]{MPS11}}]\label{PROB:MP}
    Is it true that the maximum size of a weakly triangle-free $r$-graph on $n$ vertices is attained by $T^{r}(n)$?
\end{problem}
Observe that weakly triangle-free $r$-graphs are $\Delta_{r}$-free (but not vice versa in general). 
A very recent result by Chao--Yu~{\cite[Theorem~1.4]{CY24}} shows that for $r \ge 2$, $\pi(\Delta_{r}) = {r!}/{r^{r}}$, the same edge density given by the construction $T^{r}(n)$.
Their ingenious approach employs the concept of the entropic density of hypergraphs that they introduced, which is equivalent to the well-studied Lagrangian of hypergraphs. As a result, similar to proofs using the Lagrangian Method, it does not directly provide much information about the structure of extremal $\Delta_{r}$-free constructions.  
To address this, we combine their approach with the framework (for proving a strong stability of Tur\'{a}n-type problems) established by Mubayi, Reiher, and the author in~\cite{LMR23unif}, along with~{\cite[Theorem~1.1]{HLZ24}}, to show that for large $n$, $T^{r}(n)$ is the unique extremal construction for $\Delta_{r}$. In particular, this answers the question of Mubayi--Pikhurko for large $n$ in a stronger form. 
\begin{theorem}\label{THM:hypergraph-Mantel}
    Let $r\ge 2$ be an integer. 
    There exist $\varepsilon = \varepsilon(r) > 0$ and $N_{0} = N_{0}(r)$ such that the following holds for all $n \ge N_0$. 
    Suppose that $\mathcal{H}$ is a $\Delta_{r}$-free $r$-graph on $n$ vertices with $\delta(\mathcal{H}) \ge n^{r-1}/r^{r-1} - \varepsilon n^{r-1}$. 
    Then $\mathcal{H}$ is $r$-partite. 
    In particular, for large $n$, $T^{r}(n)$ is the unique extremal $\Delta_{r}$-free construction on $n$ vertices. 
\end{theorem}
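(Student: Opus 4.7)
The plan is to prove the theorem in two stages: first establish a \emph{weak stability} result showing that $\mathcal{H}$ admits a near-partition $V_1 \cup \cdots \cup V_r$ of its vertex set with almost all edges transversal, then bootstrap to \emph{exact} $r$-partiteness using the minimum-degree hypothesis. This is precisely the setup to which the robust-stability framework of \cite{LMR23unif,HLZ24} applies, provided one can supply the weak-stability input for the family $\Delta_r$.

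For weak stability, I would extend the Chao--Yu entropic argument used to establish $\pi(\Delta_r) = r!/r^r$. The minimum-degree hypothesis yields $e(\mathcal{H}) \ge (1-O(\varepsilon))\tfrac{r!}{r^r}\binom{n}{r}$, so the entropic density of $\mathcal{H}$ is within $O(\varepsilon)$ of its Chao--Yu maximum. A stability version of the Chao--Yu entropy inequality --- characterising when near-equality holds --- should then force any nearly-optimal distribution on ordered edges of $\mathcal{H}$ to essentially factor through a balanced $r$-partition of $V(\mathcal{H})$. Unwinding this yields a partition $V(\mathcal{H}) = V_1 \cup \cdots \cup V_r$ with $|V_j| = (1/r + o(1))n$ and at most $o(n^r)$ non-transversal edges.

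For the exact step, I would invoke the machinery of Liu--Mubayi--Reiher \cite{LMR23unif} and Hou--Liu--Zhang \cite{HLZ24}. The overall strategy is a vertex-by-vertex correction driven by \emph{link stability}. For each vertex $v$, the link $\mathcal{H}_v$ is an $(r-1)$-graph on $n-1$ vertices with $|\mathcal{H}_v| \ge n^{r-1}/r^{r-1} - \varepsilon n^{r-1}$ edges, and it inherits strong local restrictions from the $\Delta_r$-freeness of $\mathcal{H}$. Applying link-stability (either a direct link-level analogue of the previous stage, or the degree-based statement of \cite[Theorem~1.1]{HLZ24}) forces $\mathcal{H}_v$ to be essentially $(r-1)$-partite, giving a canonical assignment $v \mapsto j(v) \in [r]$ compatible with $V_1, \ldots, V_r$. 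The concluding step is to rule out any edge containing two vertices with the same label: such an edge, combined with a transversal edge guaranteed to exist by the minimum-degree hypothesis, closes into a copy of some $\mathbb{T}_{r,i} \in \Delta_r$. Varying $i$ over $\{1, \ldots, \lceil r/2 \rceil\}$ provides exactly the flexibility needed to forbid every possible local deviation, regardless of which part the offending vertices lie in.

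The main obstacle will be producing a sufficiently sharp link-stability statement. The entropic approach is intrinsically global, whereas the exact step requires an approximate $(r-1)$-partite structure for \emph{every} vertex's link individually, with quantitative bounds strong enough to mesh with the minimum-degree assumption and the global partition from the first stage. Pushing the entropy argument down to the link level --- equivalently, obtaining a supersaturation-style statement ensuring that even one vertex with a non-canonical link would create a copy of some $\mathbb{T}_{r,i}$ in $\mathcal{H}$ --- and simultaneously tracking which family member blocks which local deviation, will be the technically delicate core of the proof.
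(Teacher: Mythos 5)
Your overall architecture --- an entropy-based stability input fed into the vertex-extendability framework of \cite{LMR23unif,HLZ24} --- is the same as the paper's, but the route you propose for the key input has genuine gaps, and you misplace where the technical difficulty actually lies.

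First, the framework requires a \emph{blowup-invariant} family, and $\Delta_r$ is not blowup-invariant. The paper handles this by enlarging $\Delta_r$ to the family $\mathcal{T}_r$ of all triples $A,B,C$ with $A\subseteq B\cup C$ and $(B\cap C)\setminus A\neq\emptyset$ (equivalently, forbidding all homomorphic images of $\Delta_r$), proving degree-stability for $\mathcal{T}_r$, and then transferring back to $\Delta_r$ via the hypergraph removal lemma together with Theorem~\ref{THM:LMR-vtx-extend}~\ref{THM:LMR-vtx-extend-2}. Your plan never addresses this, and without it neither the symmetrization machinery nor the removal-lemma step is available.

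Second, your stage one asks for a quantitative ``stability version of the Chao--Yu entropy inequality'' characterising near-equality for an arbitrary nearly-extremal $\mathcal{H}$. This is substantially harder than what is needed, and you correctly flag it as the delicate core --- but the paper avoids it entirely. Because Theorem~\ref{THM:LMR-vtx-extend}~\ref{THM:LMR-vtx-extend-1} only requires \emph{symmetrized}-stability, it suffices to analyse blowups of $2$-covered $\mathcal{T}_r$-free $r$-graphs, and for those one only needs the \emph{exact} uniqueness of the Lagrangian optimum (Proposition~\ref{PROP:unique-solution}): every optimal weighting is uniform on a single edge. That exact equality case is extracted from the Chao--Yu chain-rule computation (Claim~\ref{CLAIM:alpha-i-value} plus the $(n,r,r-1)$-system structure of $2$-covered $\mathcal{T}_r$-free graphs), and a soft compactness argument then converts it into the approximate statement needed in Proposition~\ref{PROP:min-deg-r-partite}. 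Your proposal, by contrast, leaves the hardest analytic step as a black box.

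Third, your exact step overstates what is required. Vertex-extendability is a \emph{conditional, one-vertex} statement: assuming $\mathcal{H}-v_\ast$ is already exactly $r$-partite and $\delta(\mathcal{H})$ is large, show $v_\ast$'s link respects the partition. You do not need approximate $(r-1)$-partiteness of every link simultaneously, nor a link-level entropy argument; the paper's Proposition~\ref{PROP:Tr-vtx-extend} is an elementary inclusion--exclusion argument using only the single graph $\mathbb{T}_{r,1}$ (not all $\mathbb{T}_{r,i}$, contrary to your claim that varying $i$ is needed). So the obstacle you identify as central is an artifact of your formulation rather than a feature of the problem.
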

\textbf{Remarks.}
\begin{itemize}
    \item The constant $\varepsilon(r)$ can be made explicit through a more careful analysis of the proofs, but we did not attempt this, as it is unlikely to yield a tight bound. In fact, determining the optimal value of $\varepsilon(r)$  is an interesting and nontrivial problem. The case $r=2$ was solved by the celebrated Andr{\'a}sfai--Erd\H{o}s--S\'{o}s Theorem~\cite{AES74}, while the case $r=3$ was solved only very recently in~\cite{LRW24a}. 
    \item In Theorem~\ref{THM:hypergraph-Mantel}, the family $\Delta_{r}$ can be replaced by the single $r$-graph (which is not contained in $\Delta_{r}$) described in~{\cite[Theorem~1.5]{CY24}}. The necessary modifications to the proofs in this paper are relatively straightforward, so we omit the details. 
    \item Theorem~\ref{THM:hypergraph-Mantel}, along with the results of Keevash--Lenz--Mubayi~{\cite[Theorem~1.4]{KLM14}} and Kang-- Nikiforov--Yuan~{\cite[Theorem~2]{KNY15}}, solves the $\alpha$-spectral Tur\'{a}n problem for $\Delta_{r}$ (as well as for the $r$-graphs mentioned in the previous remark) for all $\alpha \ge 1$ when $n$ is sufficiently large. For further details, we refer the reader to~\cite{KLM14} and~\cite{HLZ24}. 
\end{itemize}


In the next section, we introduce some necessary definitions and preliminary results. 
In Section~\ref{SEC:uniqueness}, we establish the uniqueness of optimal solution to the Lagrangian of $r$-graphs that contain no homomorphic copy of $\Delta_{r}$ (Proposition~\ref{PROP:unique-solution}).
In Section~\ref{SEC:vtx-extend}, we prove the vertex-extendability of $\Delta_{r}$ with respects to the family of $r$-partite $r$-graphs (Proposition~\ref{PROP:Tr-vtx-extend}). 
In Section~\ref{SEC:proof-hypergraph-mantel}, we present the proof of Theorem~\ref{THM:hypergraph-Mantel}. 
Section~\ref{SEC:remark} contains additional remarks and open problems. 

\section{Preliminaries}\label{SEC:prelim}
Given integers $\ell \ge r \ge 2$, we use $K_{\ell}^{r}$ to denote the complete $r$-graph on $\ell$ vertices. The superscript $r$ will be omitted when $r=2$. 

Let $\mathcal{H}$ be an $r$-graph. 
For every vertex $v \in V(\mathcal{H})$, the \textbf{link} $L_{\mathcal{H}}(v)$ of $v$ in $\mathcal{H}$ is defined as 
\begin{align*}
    L_{\mathcal{H}}(v)
    \coloneqq \left\{e\in \binom{V(\mathcal{H}) \setminus \{v\}}{r-1} \colon e\cup \{v\} \in \mathcal{H}\right\}. 
\end{align*}
For every $i \in [r-1]$, the \textbf{$i$-th shadow} $\partial_{i} \mathcal{H}$ (or simply the shadow $\partial \mathcal{H}$ if $i=1$) is given by 
\begin{align*}
    \partial_{i} \mathcal{H}
    \coloneqq \left\{e\in \binom{V(\mathcal{H})}{r-i} \colon \text{there exists $E\in \mathcal{H}$ such that $e\subseteq E$}\right\}. 
\end{align*}
Denote by $\mathcal{H} - v$ the $r$-graph obtained from $\mathcal{H}$ by removing the vertex $v$ and all edges containing $v$. 

Suppose that the vertex set of $\mathcal{H}$ is $[n]$. Given a collection of disjoint sets $V_1, \ldots, V_{n}$, the \textbf{blowup} $\mathcal{H}[V_1, \ldots, V_n]$ is obtained from $\mathcal{H}$ by replacing each vertex $i$ with the set $V_i$ and each edge with the corresponding complete $r$-partite $r$-graph. 
By \textbf{duplicating a vertex} $v\in \mathcal{H}$, we mean adding a new vertex $\hat{v}$ to $\mathcal{H}$ with the same link as $v$. 

We say $\mathcal{H}$ is \textbf{$2$-covered} if every pair of vertices is contained in at least one edge in $\mathcal{H}$. 
We say $\mathcal{H}$ is \textbf{symmetrized} if it is a blowup of some $2$-covered $r$-graph. 

A family $\mathcal{F}$ of $r$-graphs is \textbf{blowup-invariant} if, for every $\mathcal{F}$-free $r$-graph $\mathcal{H}$, every blowup of $\mathcal{H}$ is $\mathcal{F}$-free. 
Since a blowup of $\mathcal{H}$ can be obtained by duplicating vertices one by one, $\mathcal{F}$ is blowup-invariant iff duplicating any vertex in an $\mathcal{F}$-free $r$-graph $\mathcal{H}$ preserves the $\mathcal{F}$-freeness. 

Given two $r$-graphs $\mathcal{G}$ and $\mathcal{H}$, a map $\psi \colon V(\mathcal{G}) \to \mathcal{H}$ is a \textbf{homomorphism} if $\psi(e) \in \mathcal{H}$ for every $e\in \mathcal{G}$. 
We say $\mathcal{G}$ is \textbf{$\mathcal{H}$-colorable} if there exists a homomorphism from $\mathcal{G}$ to $\mathcal{H}$.

\subsection{Basic properties of triangle-free hypergraphs}
Denote by $\mathcal{T}_{r}$ the collection of $r$-graphs consisting of three edges $A, B, C$ such that 
\begin{align*}
    A \subseteq B \cup C
    \quad\text{and}\quad 
    (B\cap C) \setminus A \neq \emptyset.
\end{align*}
Note that $\Delta_{r} \subseteq \mathcal{T}_{r}$ for $r \ge 2$.
Moreover, an $r$-graph $\mathcal{H}$ is $\mathcal{T}_{r}$-free iff there is no homomorphism from any member of $\Delta_{r}$ to $\mathcal{H}$. 

The following facts are straightforward to verify.
\begin{fact}\label{FACT:Delta-blowup-invariant}
    For every $r \ge 2$, the family $\mathcal{T}_{r}$ is blowup-invariant. 
\end{fact}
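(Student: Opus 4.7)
The plan is to lean on the equivalence stated just before the Fact: an $r$-graph $\mathcal{H}$ is $\mathcal{T}_{r}$-free if and only if no member of $\Delta_{r}$ admits a homomorphism into $\mathcal{H}$. Given this characterization, blowup-invariance reduces to a one-line composition argument about homomorphisms.

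Concretely, I would fix a $\mathcal{T}_{r}$-free $r$-graph $\mathcal{H}$ on vertex set $[n]$ and an arbitrary blowup $\mathcal{H}' = \mathcal{H}[V_{1},\ldots,V_{n}]$. I would then introduce the natural projection $\pi \colon V(\mathcal{H}') \to V(\mathcal{H})$ sending every vertex of $V_{i}$ to $i$. By the very definition of a blowup, every edge $e' \in \mathcal{H}'$ picks exactly one vertex from each of $r$ distinct blobs whose indices form an edge of $\mathcal{H}$; hence $\pi(e') \in \mathcal{H}$, so $\pi$ is a hypergraph homomorphism $\mathcal{H}' \to \mathcal{H}$.

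Finally, I would argue by contrapositive. If $\mathcal{H}'$ were not $\mathcal{T}_{r}$-free, then the equivalence would yield a homomorphism $\varphi \colon \mathcal{D} \to \mathcal{H}'$ for some $\mathcal{D} \in \Delta_{r}$, and the composition $\pi \circ \varphi \colon \mathcal{D} \to \mathcal{H}$ would be a homomorphism as well, contradicting $\mathcal{T}_{r}$-freeness of $\mathcal{H}$ via the same equivalence.

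There is no substantive obstacle here: everything reduces to the stated hom-equivalence plus the immediate observation that the projection from a blowup is a homomorphism, which is why the paper describes the Fact as ``straightforward to verify''. If one preferred to avoid invoking the hom-equivalence, the alternative plan would be to use the single-vertex duplication reformulation from the preliminaries and do a short case analysis, showing that substituting the twin $\hat{v} \mapsto v$ in a purported $\mathcal{T}_{r}$-configuration $A',B',C'$ of $\mathcal{H}'$ preserves both $A \subseteq B \cup C$ and $(B \cap C) \setminus A \neq \emptyset$; the only mild point is to rule out collisions $A = B$, $A = C$, or $B = C$ after substitution, which is handled by checking that any such collision forces two of $A',B',C'$ to coincide in $\mathcal{H}'$.
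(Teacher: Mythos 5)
Your proposal is correct, and it is essentially the argument the paper has in mind: the paper gives no proof (it just declares the Fact ``straightforward to verify''), but it states the homomorphism characterization of $\mathcal{T}_{r}$-freeness immediately beforehand precisely so that blowup-invariance follows by composing a purported homomorphism $\mathcal{D} \to \mathcal{H}[V_{1},\ldots,V_{n}]$ with the projection $\pi$ onto $\mathcal{H}$. Both the main route and your sketched duplication-based alternative are sound.
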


An \textbf{$(n,r,r-1)$-system} is an $r$-graph on $n$ vertices such that every set of $r-1$ vertices is contained in at most one edge. 

\begin{fact}\label{FACT:link-triangle-free}
    Let $r \ge 3$ be an integer. 
    Suppose that $\mathcal{H}$ is a $\mathcal{T}_{r}$-free $r$-graph. Then the following statements hold.
    \begin{enumerate}[label=(\roman*)]
        \item\label{FACT:link-triangle-free-1} For every $v\in V(\mathcal{H})$, the link $L_{\mathcal{H}}(v)$ is $\mathcal{T}_{r-1}$-free. 
        \item\label{FACT:link-triangle-free-2} If $\mathcal{H}$ is $2$-covered, then $\mathcal{H}$ is a $\left(v(\mathcal{H}),r,r-1\right)$-system. 
    \end{enumerate}
\end{fact}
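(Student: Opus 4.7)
The plan is to verify both parts by contraposition, lifting or relabelling configurations between $\mathcal{H}$ and the relevant substructure.

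For (i), I start with a putative $\mathcal{T}_{r-1}$ configuration in the link: three distinct $(r-1)$-edges $A', B', C' \in L_{\mathcal{H}}(v)$ with $A' \subseteq B' \cup C'$ and some $w \in (B' \cap C') \setminus A'$. Lifting to $A \coloneqq A' \cup \{v\}$, $B \coloneqq B' \cup \{v\}$, $C \coloneqq C' \cup \{v\}$ yields three distinct edges of $\mathcal{H}$. The inclusion $A \subseteq B \cup C$ is immediate. Since edges of $L_{\mathcal{H}}(v)$ live in $V(\mathcal{H}) \setminus \{v\}$ by definition, we have $w \neq v$, so $w \in (B \cap C) \setminus A$. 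This yields a $\mathcal{T}_r$ configuration in $\mathcal{H}$, contradicting the hypothesis.

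For (ii), suppose to the contrary that two distinct edges $B, C \in \mathcal{H}$ share an $(r-1)$-set $S$, so $B = S \cup \{b\}$ and $C = S \cup \{c\}$ with $b \neq c$. Using $2$-coverage, pick an edge $A \in \mathcal{H}$ with $\{b, c\} \subseteq A$; since $b \notin C$ and $c \notin B$, the edge $A$ is distinct from both $B$ and $C$. The decisive step is to label the three edges inside the definition of $\mathcal{T}_r$ carefully: assign $B$ to the role of the ``first'' edge. Then $B \subseteq A \cup C$ because $S \subseteq C$ and $b \in A$, while $c \in (A \cap C) \setminus B$ furnishes the required witness. Hence $\{A, B, C\}$ spans a member of $\mathcal{T}_r$ inside $\mathcal{H}$, a contradiction.

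The main obstacle, modest though it is, lies in the labelling used in~(ii). A more natural first attempt would make $A$ play the first role, which would demand $A \subseteq B \cup C = S \cup \{b, c\}$; but $2$-coverage only supplies \emph{some} edge through $\{b, c\}$ and offers no control over its remaining vertices. Exploiting the asymmetry of the $\mathcal{T}_r$ definition and letting $B$ take that role resolves the issue, after which both parts reduce to routine set-theoretic verification.
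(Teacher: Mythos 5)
Your proof is correct and supplies exactly the routine verification the paper omits (the fact is stated without proof): lifting a $\mathcal{T}_{r-1}$-configuration through $v$ for (i), and for (ii) using $2$-coverage to find an edge $A\supseteq\{b,c\}$ and placing $B$ in the ``contained in the union'' role, with $c\in(A\cap C)\setminus B$ as witness. Both the distinctness checks and the labelling remark are accurate, so nothing is missing.
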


\subsection{Vertex-extendability}\label{SUBSEC:vtx-extend}
Let $r\ge 2$ be an integer and $\mathcal{F}$ be a nondegenerate family of $r$-graphs. 
Let $\mathfrak{H}$ be a family of $\mathcal{F}$-free $r$-graphs. We say 
\begin{itemize}
    \item $\mathcal{F}$ is \textbf{edge-stable} with respect to $\mathfrak{H}$ if for every $\delta>0$ there exist $\varepsilon>0$ and $n_0$ such that every $\mathcal{F}$-free $r$-graph $\mathcal{H}$ on $n \ge n_0$ vertices with $|\mathcal{H}| \ge \left(\pi(F)/r! - \varepsilon\right)n^r$ becomes a member in $\mathfrak{H}$ after removing at most $\delta n^r$ edges, 
    \item $\mathcal{F}$ is \textbf{degree-stable} with respect to $\mathfrak{H}$ if there exist $\varepsilon>0$ and $n_0$ such that every $\mathcal{F}$-free $r$-graph $\mathcal{H}$ on $n \ge n_0$ vertices with $\delta(\mathcal{H}) \ge \left(\pi(F)/(r-1)! - \varepsilon\right)n^{r-1}$ is a member in $\mathfrak{H}$, 
    \item $\mathcal{F}$ is \textbf{vertex-extendable} with respect to $\mathfrak{H}$ if there exist $\varepsilon>0$ and $n_0$ such that for every $\mathcal{F}$-free $r$-graph $\mathcal{H}$ on $n \ge n_0$ vertices with $\delta(\mathcal{H}) \ge \left(\pi(F)/(r-1)! - \varepsilon\right)n^{r-1}$ the following holds: 
        if $\mathcal{H}-v$ is a member in $\mathfrak{H}$, then $\mathcal{H}$ is a member in $\mathfrak{H}$ as well. 
    \item $\mathcal{F}$ is \textbf{symmetrized-stable} with respect to $\mathfrak{H}$ if every symmetrized $\mathcal{F}$-free $r$-graph is contained in $\mathfrak{H}$.
\end{itemize}

Vertex-extendability was introduced in~\cite{LMR23unif} to provide a unified framework for proving the degree stability of certain classes of nondegenerate hypergraph families. It was later refined in~\cite{HLZ24}.

\begin{theorem}[{\cite[Theorem~1.7]{LMR23unif}} and {\cite[Theorem~1.1]{HLZ24}}]\label{THM:LMR-vtx-extend}
    Suppose that $\mathcal{F}$ is a blowup-invariant nondegenerate family of $r$-graphs and $\mathfrak{H}$ is a hereditary\footnote{Here, hereditary means that if $H\in \mathfrak{H}$, then every subgraph of $H$ is also contained in $\mathfrak{H}$.} family of $\mathcal{F}$-free $r$-graphs. 
    Then the following statements hold.
    \begin{enumerate}[label=(\roman*)]
        \item\label{THM:LMR-vtx-extend-1} If $\mathcal{F}$ is both symmetrized-stable and vertex-extendable with respect to $\mathfrak{H}$, then $\mathcal{F}$ is degree-stable with respect to $\mathfrak{H}$.
        \item\label{THM:LMR-vtx-extend-2} If $\mathcal{F}$ is both edge-stable and vertex-extendable with respect to $\mathfrak{H}$, then $\mathcal{F}$ is degree-stable with respect to $\mathfrak{H}$.
    \end{enumerate}
\end{theorem}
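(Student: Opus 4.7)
The plan is to proceed by induction on $n$, with vertex-extendability driving the inductive step while edge-stability (for part~(ii)) or symmetrized-stability (for part~(i)) supplies the base case. Fix the vertex-extendability constants $(\varepsilon_{0},n_{0})$ and aim to establish degree-stability with some $\varepsilon \le \varepsilon_{0}/2$ and a sufficiently large threshold $N_{1} \ge n_{0}$. The inductive hypothesis asserts that every $\mathcal{F}$-free $r$-graph on fewer than $n$ vertices that meets the minimum-degree bound already lies in $\mathfrak{H}$.

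For the inductive step (common to both parts), the central task is to locate a vertex $v \in V(\mathcal{H})$ such that $\mathcal{H}-v$ still satisfies the vertex-extendability hypothesis $\delta(\mathcal{H}-v) \ge (\pi(\mathcal{F})/(r-1)! - \varepsilon_{0})(n-1)^{r-1}$. Writing $d \coloneqq \pi(\mathcal{F})/(r-1)!$, the degree of any $u \neq v$ drops by the codegree $d_{\mathcal{H}}(\{u,v\})$, whose average over $v \in V(\mathcal{H})\setminus\{u\}$ equals $(r-1)d_{\mathcal{H}}(u)/(n-1) = O(n^{r-2})$, far smaller than the available slack $(\varepsilon_{0}-\varepsilon)n^{r-1}/2$ once $n$ is large. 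A Markov-type union bound over the $n$ vertices $u$ then produces a single $v$ whose removal keeps the minimum degree above the threshold. Inductive hypothesis yields $\mathcal{H}-v \in \mathfrak{H}$, and vertex-extendability promotes this to $\mathcal{H} \in \mathfrak{H}$.

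The base case $n = N_{1}$ requires a direct argument. For part~(ii), the minimum-degree assumption forces $|\mathcal{H}| \ge (\pi(\mathcal{F})/r! - r\varepsilon)\binom{n}{r}$, so edge-stability yields a subgraph $\mathcal{H}^{\ast} \in \mathfrak{H}$ differing from $\mathcal{H}$ in at most $\delta n^{r}$ edges for any prescribed $\delta$. Restricting to a large vertex subset $U$ on which $\mathcal{H}[U]$ coincides with $\mathcal{H}^{\ast}[U]$ (removing the few vertices incident to the exceptional edges) and invoking hereditariness of $\mathfrak{H}$ gives a starting point $\mathcal{H}[U] \in \mathfrak{H}$; re-inserting the discarded vertices one at a time via vertex-extendability, whose min-degree hypothesis is inherited from the global bound on $\mathcal{H}$, eventually recovers $\mathcal{H}$. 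For part~(i), I would instead run a Zykov-style symmetrization: repeatedly replace the link of a lower-degree vertex by a copy of a higher-degree vertex's link, which preserves $\mathcal{F}$-freeness by blowup-invariance and does not decrease the edge count. The process terminates at a symmetrized $r$-graph $\mathcal{H}^{\bullet}$, which lies in $\mathfrak{H}$ by symmetrized-stability, and the symmetrizations are then reversed one at a time using vertex-extendability.

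The main obstacle is the reversal step in the base case of part~(i). Each undoing step restores one vertex's original link and a priori can drop the minimum degree of the intermediate graph below the vertex-extendability threshold, so a careless choice of reversal order would stall the argument. Controlling this requires careful bookkeeping of the symmetrization order so that the reverse sequence stays inside the degree-stability regime throughout, together with repeated appeals to the hereditary property of $\mathfrak{H}$ to absorb transient local anomalies. I expect this bookkeeping, along with the precise choice of $U$ and the re-adding order in part~(ii), to carry most of the technical weight of the theorem, while the inductive step itself is a comparatively routine averaging exercise.
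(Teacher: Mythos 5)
First, note that the paper does not prove this statement at all; it is imported verbatim from~\cite{LMR23unif} and~\cite{HLZ24}, so your proposal can only be judged against those proofs. Measured that way, your inductive step contains a fatal gap. For a downward induction that terminates at a constant-size base case $n=N_{1}$, the statement being proved must be uniform in $n$: to apply the inductive hypothesis to $\mathcal{H}-v$ you need $\delta(\mathcal{H}-v)\ge (d-\varepsilon)(n-1)^{r-1}$ with the \emph{same} $\varepsilon$, not merely the weaker vertex-extendability bound with $\varepsilon_{0}$. (If the hypothesis at level $n-1$ uses $\varepsilon_{0}$ while the conclusion at level $n$ uses $\varepsilon<\varepsilon_{0}$, the induction does not close.) For a vertex $u$ sitting at the minimum degree $(d-\varepsilon)n^{r-1}$, the admissible codegree loss is then $(d-\varepsilon)\bigl(n^{r-1}-(n-1)^{r-1}\bigr)\approx (r-1)(d-\varepsilon)n^{r-2}$, which is \emph{exactly} the average of $d_{\mathcal{H}}(\{u,v\})$ over $v$; Markov therefore excludes no small set of candidates $v$, and the union bound over the $n$ choices of $u$ yields nothing. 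Put differently, the slack $(\varepsilon_{0}-\varepsilon)n^{r-1}$ you invoke is available once, not once per level: descending through $\Theta(n)$ removals costs up to $\binom{n-2}{r-2}$ per degree per step, i.e.\ $\Theta(n^{r-1})$ in total, and since $\pi(\mathcal{F})<1$ this wipes out the entire minimum degree long before $n=N_{1}$. The actual arguments in~\cite{LMR23unif,HLZ24} avoid this by re-inserting only a set $U$ of at most $\zeta n$ vertices with $\zeta\ll\varepsilon_{0}$, so that every intermediate graph $\mathcal{H}-U'$, $U'\subseteq U$, still satisfies $\delta\ge (d-\varepsilon_{0})(n-|U'|)^{r-1}$; the ``base case'' is a graph on $(1-\zeta)n$ vertices produced by a cleaning/symmetrization argument, never a constant-size graph.

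The base cases are also not sound as stated. Edge-stability at a fixed $n=N_{1}$ only places $\mathcal{H}$ within $\delta N_{1}^{r}$ edges of $\mathfrak{H}$, which is not membership; and for large $n$ your plan to delete ``the few vertices incident to the exceptional edges'' fails because $\delta n^{r}$ exceptional edges can touch every vertex. The correct move is to delete only vertices meeting at least $\gamma n^{r-1}$ exceptional edges (a set of size $O(\delta n/\gamma)$), but then exceptional edges survive inside the retained part and hereditariness of $\mathfrak{H}$ no longer gives you a starting point --- closing this is a genuine piece of work in~\cite{LMR23unif}, not a routine restriction. Similarly, in part~(i) a Zykov step that replaces $L_{\mathcal{H}}(u)$ by $L_{\mathcal{H}}(v)$ can decrease the degrees of third vertices, so the reversal chain does not inherit the minimum-degree hypothesis needed to apply vertex-extendability at each undoing step; you flag this as ``bookkeeping,'' but it is precisely where the content of the theorem lies, and it is resolved in the cited proofs by interleaving symmetrization with the controlled vertex-removal process rather than by reordering the reversals.
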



\subsection{Lagrangian and entropy}\label{SUBSEC:entropy-lagrangian}
First, we introduce the definition of the Lagrangian and some basic properties related to it. This notion was first introduced by Motzkin–Straus~\cite{MS65} for graphs to provide a different proof for the celebrated Tur\'{a}n Theorem, and used later by Frankl--R\"{o}dl~\cite{FR84} for hypergraph Tur\'{a}n problems.

Let $\mathcal{H}$ be an $r$-graph on $[n]$. 
Define the \textbf{Lagrangian polynomial} of $\mathcal{H}$ as 
\begin{align*}
    P_{\mathcal{H}}(X_1, \ldots, X_n)
    \coloneqq \sum_{e\in \mathcal{H}}\prod_{i\in e}X_i.
\end{align*}
For every $i \in [n]$, denote by $\partial_{i}P_{\mathcal{H}}$ the partial derivative of $P_{\mathcal{H}}$ with respect to the $i$-th variable. 
The \textbf{Lagrangian} of $\mathcal{H}$ is defined as 
\begin{align*}
    \lambda(\mathcal{H})
    \coloneqq \max\left\{P_{\mathcal{H}}(x_1, \ldots, x_n) \colon (x_1, \ldots, x_n) \in \mathbb{S}^{n-1}\right\}, 
\end{align*}
where $\mathbb{S}^{n-1}$ is the standard $(n-1)$-dimensional simplex, i.e.
\begin{align*}
    \mathbb{S}^{n-1}
    \coloneqq \max\left\{(x_1, \ldots, x_n) \in \mathbb{R}^{n} \colon x_1 + \cdots + x_n = 1~\text{and}~x_i \ge 0~\text{for}~i \in [n]\right\}.
\end{align*}
For convenience, let 
\begin{align*}
    \mathrm{Opt}(\mathcal{H})
    \coloneqq \left\{(x_1, \ldots, x_n) \in \mathbb{S}^{n} \colon P_{\mathcal{H}}(x_1, \ldots, x_n) = \lambda(\mathcal{H})\right\}.
\end{align*}
Vectors in $\mathrm{Opt}(\mathcal{H})$ are called \textbf{optimal solutions} of $P_{\mathcal{H}}$.

The following fact follows directly from the definition. 
\begin{fact}\label{FACT:Lagrangian}
    Let $\mathcal{G}$ be an $r$-graph on $[m]$ and $\mathcal{H} \coloneqq \mathcal{G}[V_1, \ldots, V_{m}]$ be a blowup of $\mathcal{G}$. 
    Let $n \coloneqq |V_1| + \cdots + |V_m|$ and $x_i \coloneqq |V_i|/n$ for $i \in [m]$. The following statements hold. 
    \begin{enumerate}[label=(\roman*)]
        \item $|\mathcal{H}|/n^r = P_{\mathcal{G}}(x_1, \ldots, x_m)$, and 
        \item for every $j \in [m]$ and every $v \in V_j$, 
        \begin{align*}
            d_{\mathcal{H}}(v)/n^{r-1}
            = \sum_{e\in L_{\mathcal{H}}(j)} \prod_{i \in e} x_i 
            = \partial_{j}P_{\mathcal{H}}(x_1, \ldots, x_m).
        \end{align*}
    \end{enumerate}
\end{fact}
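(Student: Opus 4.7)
The plan is to verify both statements by direct edge-counting, using only the definitions of the blowup, the link, and the Lagrangian polynomial. The key observation is that every edge of $\mathcal{H} = \mathcal{G}[V_1, \ldots, V_m]$ is a transversal of some edge of $\mathcal{G}$, and this matches term-by-term with the monomial expansion of $P_{\mathcal{G}}$.

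For part (i), I would note that each edge $e = \{i_1, \ldots, i_r\} \in \mathcal{G}$ contributes exactly $\prod_{i \in e} |V_i|$ edges to $\mathcal{H}$, namely the $r$-sets obtained by choosing one vertex from each $V_i$ with $i \in e$. Distinct edges of $\mathcal{G}$ produce disjoint collections of edges of $\mathcal{H}$ since they are supported on different multisets of parts, so summing yields
\begin{align*}
    |\mathcal{H}| = \sum_{e \in \mathcal{G}} \prod_{i \in e} |V_i|.
\end{align*}
Dividing by $n^r$ and substituting $x_i = |V_i|/n$ gives $|\mathcal{H}|/n^r = \sum_{e\in \mathcal{G}} \prod_{i\in e} x_i = P_{\mathcal{G}}(x_1, \ldots, x_m)$, as claimed.

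For part (ii), fix $j \in [m]$ and $v \in V_j$. Each edge of $\mathcal{H}$ containing $v$ has the form $\{v\} \cup \{v_i \colon i \in e\}$, where $e$ is an $(r-1)$-subset of $[m] \setminus \{j\}$ with $e \cup \{j\} \in \mathcal{G}$ (equivalently $e \in L_{\mathcal{G}}(j)$) and $v_i \in V_i$. Hence
\begin{align*}
    d_{\mathcal{H}}(v) = \sum_{e \in L_{\mathcal{G}}(j)} \prod_{i \in e} |V_i|,
\end{align*}
and dividing by $n^{r-1}$ yields the first equality. The second equality is pure calculus: differentiating $P_{\mathcal{G}}$ with respect to $X_j$ kills all monomials indexed by edges not containing $j$ and drops the factor $X_j$ from those that do, giving $\partial_{j} P_{\mathcal{G}}(X_1, \ldots, X_m) = \sum_{e \in L_{\mathcal{G}}(j)} \prod_{i \in e} X_i$.

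There is no real obstacle here; both assertions reduce to routine bookkeeping. The only care needed is notational: the link appearing on the right-hand side is naturally read as $L_{\mathcal{G}}(j)$ (since $j \in [m] = V(\mathcal{G})$), and one must consistently track the normalization factors $n^r$ and $n^{r-1}$ when passing between counts in the blowup and evaluations of $P_{\mathcal{G}}$ at the vector $(x_1, \ldots, x_m)$.
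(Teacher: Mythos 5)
Your proof is correct and is exactly the routine verification the paper has in mind: it states this fact without proof, noting only that it ``follows directly from the definition.'' Your reading of the right-hand side as $L_{\mathcal{G}}(j)$ and $\partial_{j}P_{\mathcal{G}}$ (rather than the $\mathcal{H}$-subscripts in the statement, which are evidently typos since $j \in [m] = V(\mathcal{G})$) is also the intended one.
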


Given a vector $\vec{x} = (x_1, \ldots, x_n) \in \mathbb{R}^{n}$, its \textbf{support} is defined as 
\begin{align*}
    \mathrm{Supp}(\vec{x})
    \coloneqq \left\{i \in [n] \colon x_i \neq 0\right\}.
\end{align*}
The following lemma follows from a standard application of the Lagrangian Multiplier Method (see e.g.~{\cite[Theorem~2.1]{FR84}}).
\begin{lemma}\label{LEMMA:Lagrangian-multiplier}
    Let $\mathcal{H}$ be an $r$-graph on $[n]$. Suppose that $\vec{x} = (x_1, \ldots, x_n) \in \mathrm{Opt}(\mathcal{H})$. 
    Then for every $j \in \mathrm{Supp}(\vec{x})$, it holds that 
    \begin{align*}
        \partial_{j}P_{\mathcal{H}}(x_1, \ldots, x_m)
        = r \cdot \lambda(\mathcal{H}). 
    \end{align*}
\end{lemma}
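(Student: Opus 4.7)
The plan is to apply the standard method of Lagrange multipliers to the constrained optimization problem that defines $\lambda(\mathcal{H})$, namely maximizing the polynomial $P_{\mathcal{H}}$ over the simplex $\mathbb{S}^{n-1}$. Writing the equality constraint as $g(\vec{x}) = \sum_{i=1}^{n} x_i - 1 = 0$ and keeping track of the inequality constraints $x_i \ge 0$, the KKT conditions guarantee that at any maximizer $\vec{x} \in \mathrm{Opt}(\mathcal{H})$ there exist a real multiplier $\mu$ and nonnegative multipliers $\alpha_j \ge 0$ satisfying
\[
\partial_j P_{\mathcal{H}}(\vec{x}) \;=\; \mu - \alpha_j
\quad\text{for every } j\in [n],
\]
together with the complementary slackness condition $\alpha_j x_j = 0$. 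For each $j \in \mathrm{Supp}(\vec{x})$ we have $x_j > 0$, so complementary slackness forces $\alpha_j = 0$, and hence $\partial_j P_{\mathcal{H}}(\vec{x}) = \mu$ for every $j$ in the support.

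It then remains to identify the multiplier $\mu$ explicitly, and for this I would invoke Euler's identity for homogeneous polynomials: since $P_{\mathcal{H}}$ is homogeneous of degree $r$,
\[
\sum_{j=1}^{n} x_j \, \partial_j P_{\mathcal{H}}(\vec{x}) \;=\; r \cdot P_{\mathcal{H}}(\vec{x}) \;=\; r \cdot \lambda(\mathcal{H}).
\]
The terms with $j \notin \mathrm{Supp}(\vec{x})$ vanish because $x_j = 0$, while for $j \in \mathrm{Supp}(\vec{x})$ we already have $\partial_j P_{\mathcal{H}}(\vec{x}) = \mu$. Hence
\[
\mu \sum_{j \in \mathrm{Supp}(\vec{x})} x_j \;=\; r \cdot \lambda(\mathcal{H}),
\]
and since $\sum_{j \in \mathrm{Supp}(\vec{x})} x_j = \sum_{j=1}^n x_j = 1$, we conclude $\mu = r \cdot \lambda(\mathcal{H})$, as required.

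I do not anticipate any real obstacle here: the argument is a textbook combination of first-order optimality and homogeneity, which is precisely why the paper labels this as a standard application of the Lagrangian Multiplier Method. If one wishes to avoid invoking the full KKT machinery, an elementary alternative is to perturb $\vec{x}$ along directions that remain in the simplex. Concretely, for any $j,k \in \mathrm{Supp}(\vec{x})$ and sufficiently small $|t|$, the vector $\vec{x} + t(\mathbf{e}_j - \mathbf{e}_k)$ still lies in $\mathbb{S}^{n-1}$, so maximality of $\vec{x}$ forces the first-order term $t\bigl(\partial_j P_{\mathcal{H}}(\vec{x}) - \partial_k P_{\mathcal{H}}(\vec{x})\bigr)$ to vanish. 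This yields equality of all partials on the support, and the common value is then pinned down by Euler's identity exactly as above.
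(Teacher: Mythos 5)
Your argument is correct and is precisely the ``standard application of the Lagrangian Multiplier Method'' that the paper invokes (citing Frankl--R\"{o}dl) without writing out: first-order optimality on the support combined with Euler's identity for the degree-$r$ homogeneous polynomial $P_{\mathcal{H}}$. The elementary perturbation along $\mathbf{e}_j - \mathbf{e}_k$ that you sketch as an alternative is also valid and is the usual way the cited result is proved.
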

The following result concerning the Lagrangian 
of $\mathcal{T}_{r}$-free $r$-graphs was established in~\cite{CY24}.
\begin{theorem}[{\cite[Theorem~7.1]{CY24}}]\label{THM:CY24-Lagrangian-density}
    Let $r \ge 2$ be an integer. 
    Suppose that $\mathcal{H}$ is a $\mathcal{T}_{r}$-free $r$-graph. Then $\lambda(\mathcal{H}) = {1}/{r^r}$. 
\end{theorem}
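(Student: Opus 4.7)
My plan is to prove Theorem~\ref{THM:CY24-Lagrangian-density} by combining a Motzkin--Straus-style symmetrization with an inductive entropy argument on $r$, with the base case $r=2$ being the classical Motzkin--Straus theorem for triangle-free graphs.

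The first step is a symmetrization reduction. Fix an optimal $\vec{x} \in \mathrm{Opt}(\mathcal{H})$. If two vertices $u, v \in \mathrm{Supp}(\vec{x})$ are not contained together in any edge of $\mathcal{H}$, then $P_\mathcal{H}$ is separately linear in $x_u$ and $x_v$, and Lemma~\ref{LEMMA:Lagrangian-multiplier} gives $\partial_u P_\mathcal{H}(\vec{x}) = \partial_v P_\mathcal{H}(\vec{x}) = r\,\lambda(\mathcal{H})$, so all the mass of $v$ can be transferred onto $u$ without changing $P_\mathcal{H}(\vec{x})$. Iterating, the restriction $\mathcal{H}' \coloneqq \mathcal{H}[\mathrm{Supp}(\vec{x})]$ may be assumed to be $2$-covered, and by Fact~\ref{FACT:link-triangle-free}\ref{FACT:link-triangle-free-2}, $\mathcal{H}'$ is then an $(m, r, r-1)$-system, i.e., a partial Steiner system.

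The second step translates the Lagrangian bound into an entropy inequality. Define the probability distribution $\mu(e) \coloneqq \prod_{i \in e} x_i / \lambda(\mathcal{H}')$ on edges of $\mathcal{H}'$, draw $E \sim \mu$, and let $(V_1, \ldots, V_r)$ be a uniformly random ordering of $E$. A direct computation shows that each $V_i$ has marginal distribution $\vec{x}$ and that
\[
  \log \lambda(\mathcal{H}') \;=\; H(\mu) - r\,H(\vec{x}),
\]
so the target bound $\lambda(\mathcal{H}') \le 1/r^r$ becomes
\[
  r\,H(V_1) - H(V_1, \ldots, V_r) \;\ge\; \log(r^r/r!).
\]
The partial Steiner property immediately gives the collapse $H(V_1, \ldots, V_r) = H(V_1, \ldots, V_{r-1})$, since $V_r$ is a deterministic function of the unordered set $\{V_1, \ldots, V_{r-1}\}$.

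The third and main step is the induction on $r$. Conditional on $V_1 = v$, the tuple $(V_2, \ldots, V_r)$ is distributed according to a probability measure supported on edges of the link $L_{\mathcal{H}'}(v)$, which by Fact~\ref{FACT:link-triangle-free}\ref{FACT:link-triangle-free-1} is $\mathcal{T}_{r-1}$-free. Applying the inductive hypothesis at level $r-1$ to this conditional ensemble produces a conditional saving of $\log((r-1)^{r-1}/(r-1)!)$ for $H(V_2, \ldots, V_r \mid V_1 = v)$; averaging over $V_1$ and combining with the ``free'' term $H(V_1)$ from the partial Steiner collapse should telescope exactly to $\log(r^r/r!)$. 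The main obstacle lies here: the naive averaging of the inductive bound over $V_1$ loses constants via Jensen, and recovering sharpness will require using the partial Steiner property in a second, more refined way -- likely by noting that each link $L_{\mathcal{H}'}(v)$ is itself $2$-covered and partial Steiner (as an $(r-1)$-graph) so that the induction hypothesis applies in its tight form, together with a convexity inequality on the conditional entropies as $v$ varies in the support of $\vec{x}$.
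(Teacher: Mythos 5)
This theorem is not proved in the paper at all --- it is quoted from Chao--Yu \cite{CY24}, so there is no in-paper proof to compare against; the paper only exposes the machinery of that proof indirectly in Section~3 (the quantities $\alpha_i = 2^{\mathbb{H}(X_i\mid X_{i+1},\ldots,X_r)-\mathbb{H}(X_i)}$, the superadditivity $\alpha_i+\alpha_j\le\alpha_{i+j}$ from {\cite[Lemma~7.2]{CY24}}, and the numerical Lemma~\ref{LEMMA:stability-inequality-a}). Your preparatory steps are sound: the symmetrization to a $2$-covered support, the identity $\log\lambda(\mathcal{H}') = H(\mu) - rH(\vec{x})$ at an optimum (which does require Lemma~\ref{LEMMA:Lagrangian-multiplier} to see that each $V_i$ has marginal $\vec{x}$), and the collapse $H(V_1,\ldots,V_r)=H(V_1,\ldots,V_{r-1})$ from the partial Steiner property are all correct and consistent with Proposition~\ref{PROP:entropy-difference}.

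The gap is exactly where you flag it, and it is not a removable technicality: the induction on links cannot close by itself. Conditioning on $V_1=v$ and applying the level-$(r-1)$ bound to the ($\mathcal{T}_{r-1}$-free) link gives, after averaging, $H(V_1,\ldots,V_r)\le H(V_1)+(r-1)H(V_2\mid V_1)+\log\bigl((r-1)!/(r-1)^{r-1}\bigr)$, and bounding $H(V_2\mid V_1)\le H(V_1)$ yields a deficit of $\log\bigl(r/(r-1)\bigr)^{r-1}$ against the target $\log(r!/r^r)$. In the extremal example $T^r(n)$ both the link bound and the global bound are tight, so this slack must be recovered from a quantitative lower bound on the mutual information between $V_1$ and the remaining coordinates --- in effect the statement $\alpha_{r-1}\ge (r-1)\alpha_1$ and ultimately $\alpha_i+\alpha_j\le\alpha_{i+j}$. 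That superadditivity is the actual content of the Chao--Yu proof and the only place the triangle-freeness is used beyond the Steiner property of the shadow; your proposed remedies (applying the inductive hypothesis ``in its tight form'' to the links, plus a convexity inequality in $v$) do not produce it, since the links of a non-extremal configuration can individually saturate the level-$(r-1)$ bound. To complete the argument you would need to prove the superadditivity (or an equivalent mutual-information inequality) directly, which requires the coupling/injection argument of {\cite[Lemma~7.2]{CY24}}, and then finish with Lemma~\ref{LEMMA:stability-inequality-a}. The matching lower bound $\lambda(\mathcal{H})\ge 1/r^r$ (for nonempty $\mathcal{H}$) should also be recorded, though it is immediate from weighting one edge uniformly.
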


Below, we present definitions and results related to entropy as introduced in~\cite{CY24}.

Let $X$ be a discrete random variable taking values in the set $\Omega$.
For simplicity, let $p_{X}(x) \coloneqq \mathbb{P}[X = x]$ for every $x\in \Omega$. 
The \textbf{support} of $X$ is defined as
\begin{align*}
    \mathrm{Supp}(X)
    \coloneqq \left\{x\in \Omega \colon p_{X}(x) > 0\right\}. 
\end{align*}
Recall that the well-known \textbf{Shannon entropy}~\cite{Sha48} of $X$ is given by  
\begin{align*}
    \mathbb{H}(X)
    \coloneqq - \sum_{x \in \mathrm{Supp}(X)} p_{X}(x) \cdot \log p_{X}(x). 
\end{align*}

Following the definition from~\cite{CY24}, an $r$-tuple of random variables $(X_1, \ldots, X_{r})$ is \textbf{a random edge with uniform ordering} on $\mathcal{H}$ if the following conditions are satisfied:
\begin{enumerate}[label=(\roman*)]
    \item $(X_1, \ldots, X_{r})$ is symmetric, that is, $(X_1, \ldots, X_{r})$ is the same as $(X_{\sigma(1)}, \ldots, X_{\sigma(r)})$ for every permutation $\sigma$ of $[r]$,
    \item $\{X_1, \ldots, X_{r}\}$ is always an edge of $\mathcal{H}$.   
\end{enumerate}
The \textbf{entropy density} $\lambda_{\mathrm{entropy}}(\mathcal{H})$ of $\mathcal{H}$ is defined as 
\begin{align*}
    \lambda_{\mathrm{entropy}}(\mathcal{H})
    \coloneqq \max_{(X_1, \ldots, X_{r})} 2^{\mathbb{H}(X_1, \ldots, X_{r}) - r \cdot \mathbb{H}(X_1)}, 
\end{align*}
where the maximum is taken over all random edges with uniform ordering on $\mathcal{H}$. 

It was shown in~{\cite[Proposition~5.4]{CY24}} that, for every $r$-graph $\mathcal{H}$, 
\begin{align*}
    \lambda_{\mathrm{entropy}}(\mathcal{H}) = r! \cdot \lambda(\mathcal{H}).
\end{align*}

Given an $r$-graph $\mathcal{H}$ on vertex set $V$, denote by $\vec{\mathcal{H}}$ the collection of all ordered $r$-tuples $(i_1, \ldots, i_{r}) \in V^{r}$ such that $\{i_1, \ldots, i_r\} \in \mathcal{H}$. 
Let 
\begin{align*}
    P_{\vec{\mathcal{H}}}(X_1, \ldots, X_n)
    \coloneqq \sum_{(i_1, \ldots, i_{r})\in \vec{\mathcal{H}}} X_{i_1} \cdots X_{i_{r}}
    = r! \cdot P_{\mathcal{H}}(X_1, \ldots, X_n). 
\end{align*}
For every $j \in [r-1]$, the \textbf{$j$-th ordered shadow} $\partial_{j}\vec{\mathcal{H}}$ is defined as 
\begin{align*}
    \partial_{j}\vec{\mathcal{H}}
    \coloneqq \left\{(i_1, \ldots, i_{r-j}) \in V^{r-j} \colon \{i_1, \ldots, i_{r-j}\} \in \partial_{j}\mathcal{H}\right\}.
\end{align*}
For every ordered $j$-tuple $(i_1, \ldots, i_j) \in \partial_{r-j}\vec{\mathcal{H}}$, the \textbf{ordered link} of $(i_1, \ldots, i_j)$ in $\vec{\mathcal{H}}$ is defined as  
\begin{align*}
    L_{\vec{\mathcal{H}}}(i_1, \ldots, i_j)
    \coloneqq \left\{(i_{j+1}, \ldots, i_{r}) \in \partial_{j}\vec{\mathcal{H}} \colon (i_1, \ldots, i_r) \in \vec{\mathcal{H}}\right\}. 
\end{align*}
%

The following result follows from a modification of the proof for~{\cite[Proposition~5.4]{CY24}}.
\begin{proposition}\label{PROP:entropy-difference}
    Let $\mathcal{H}$ be an $r$-graph on $[n]$ and $(x_1, \ldots, x_{n}) \in \mathbb{S}^{n-1}$ be a vector such that $\beta \coloneqq P_{\vec{\mathcal{H}}}(x_1, \ldots, x_n) > 0$. 
    Consider the random edge $(X_1, \ldots, X_{r})$ with uniform ordering on $\mathcal{H}$, given by the following probability distribution: 
    \begin{align}\label{equ:X1-Xr-prob-def}
        \mathbb{P}\left[(X_1, \ldots, X_{r}) = (i_1, \ldots, i_{r})\right]
        \coloneqq 
        y_{i_1, \ldots, i_r}
        \coloneqq \frac{x_{i_1} \cdots x_{i_r}}{\beta} 
        \quad\text{for every}\quad (i_1, \ldots, i_{r})\in \vec{\mathcal{H}}. 
    \end{align}
    Then for every $j \in [r]$, we have 
    \begin{align}\label{equ:entropy-diff-equal}
        & \mathbb{H}(X_1, \ldots, X_{r}) - \frac{r}{j} \cdot \mathbb{H}(X_1, \ldots, X_{j}) 
        = \log \beta 
            - \frac{r}{j} \cdot \sum_{(i_1, \ldots, i_j)} y_{i_1, \ldots, i_{j}} \cdot \log \frac{x_{i_1} \cdots x_{i_j}}{y_{i_1, \ldots, i_{j}}}, 
    \end{align}
    where the summation is taken over $\mathrm{Supp}(X_1, \ldots, X_{j})$, and for each $(i_1, \ldots, i_{j}) \in \partial_{r-j}\vec{\mathcal{H}}$, 
    \begin{align}\label{equ:X1-Xj-prob-def}
        y_{i_1, \ldots, i_{j}}
        \coloneqq \frac{x_{i_1} \cdots x_{i_{j}}}{\beta} \cdot \sum_{(i_{j+1}, \ldots, i_{r}) \in L_{\vec{\mathcal{H}}}(i_1, \ldots, i_{j})} x_{i_{j+1}} \cdots x_{i_{r}}.
    \end{align}
    In particular, 
    \begin{align*}
        \mathbb{H}(X_1, \ldots, X_{r}) - \frac{r}{j} \cdot \mathbb{H}(X_1, \ldots, X_{j})
        \ge \log \beta - \frac{r}{j} \cdot \log P_{\partial_{r-j}\vec{\mathcal{H}}}(x_1, \ldots, x_n).  
    \end{align*}
    Moreover, if $(x_1, \ldots, x_n) \in \mathrm{Opt}(\mathcal{H})$, then 
    \begin{align}\label{equ:entropy-equal-Lagrangian}
        \mathbb{H}(X_1, \ldots, X_{r}) - r \cdot \mathbb{H}(X_1)
        = \log \beta. 
    \end{align}
\end{proposition}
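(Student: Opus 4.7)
The plan is to prove everything by direct expansion, relying on two structural ingredients: the symmetry of $(X_1,\ldots,X_r)$ (which forces all single-coordinate marginals to coincide) and the fact that the joint marginal of the first $j$ coordinates is precisely the distribution $\{y_{i_1,\ldots,i_j}\}$ specified in (\ref{equ:X1-Xj-prob-def}) — a routine check obtained by summing (\ref{equ:X1-Xr-prob-def}) over the last $r-j$ coordinates.

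For the identity (\ref{equ:entropy-diff-equal}), I would first substitute $y_{i_1,\ldots,i_r} = x_{i_1}\cdots x_{i_r}/\beta$ into the definition of Shannon entropy to obtain
\[
\mathbb{H}(X_1,\ldots,X_r) = \log \beta - \sum_{k=1}^{r} \sum_{(i_1,\ldots,i_r)\in\vec{\mathcal{H}}} y_{i_1,\ldots,i_r}\log x_{i_k},
\]
and use symmetry to rewrite each inner sum as $\sum_i p_{X_1}(i)\log x_i$, yielding $\mathbb{H}(X_1,\ldots,X_r) = \log\beta - r\sum_i p_{X_1}(i)\log x_i$. Running the analogous computation on $\mathbb{H}(X_1,\ldots,X_j)$ via the marginal distribution $\{y_{i_1,\ldots,i_j}\}$, and combining with the symmetry-induced identity $\sum y_{i_1,\ldots,i_j}\log(x_{i_1}\cdots x_{i_j}) = j\sum_i p_{X_1}(i)\log x_i$, rearranges algebraically into (\ref{equ:entropy-diff-equal}).

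The inequality then follows from a single invocation of Jensen's inequality (log-sum form) applied to the probability weights $y_{i_1,\ldots,i_j}$ and the nonnegative quantities $x_{i_1}\cdots x_{i_j}/y_{i_1,\ldots,i_j}$: this gives
\[
\sum y_{i_1,\ldots,i_j}\log \frac{x_{i_1}\cdots x_{i_j}}{y_{i_1,\ldots,i_j}} \le \log \sum x_{i_1}\cdots x_{i_j},
\]
where the sum on the right is safely enlarged to all of $\partial_{r-j}\vec{\mathcal{H}}$ since $\mathrm{Supp}(X_1,\ldots,X_j)\subseteq \partial_{r-j}\vec{\mathcal{H}}$. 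Substituting into (\ref{equ:entropy-diff-equal}) produces the displayed inequality.

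For the ``moreover'' statement, I would specialize to $j=1$ and argue that the residual sum $\sum_i y_i\log(x_i/y_i)$ vanishes. Using $P_{\vec{\mathcal{H}}}=r!\,P_{\mathcal{H}}$ together with the symmetry of $\vec{\mathcal{H}}$, the inner link sum in (\ref{equ:X1-Xj-prob-def}) equals $(r-1)!\,\partial_i P_{\mathcal{H}}(\vec{x})$; when $\vec{x}\in \mathrm{Opt}(\mathcal{H})$ and $i\in \mathrm{Supp}(\vec{x})$, Lemma~\ref{LEMMA:Lagrangian-multiplier} gives $\partial_i P_{\mathcal{H}}(\vec{x}) = r\lambda(\mathcal{H})$, so this link sum equals $r!\,\lambda(\mathcal{H}) = \beta$ and therefore $y_i = x_i$ on the support. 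Each term $y_i\log(x_i/y_i)$ then vanishes and (\ref{equ:entropy-equal-Lagrangian}) follows. I do not anticipate a genuine obstacle — the argument is essentially mechanical manipulation of the entropy definition combined with the stationarity condition for a Lagrangian optimum; the only minor bookkeeping point is confirming that $\mathrm{Supp}(X_1,\ldots,X_j)\subseteq \partial_{r-j}\vec{\mathcal{H}}$ so that the Jensen step and the support restrictions are consistent.
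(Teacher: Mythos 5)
Your proposal is correct and follows essentially the same route as the paper: expand the entropies using the explicit product form of the probabilities, exploit the symmetry of $(X_1,\ldots,X_r)$ to relate the $r$-fold and $j$-fold sums (the paper does this by averaging $\log(x_{i_1}\cdots x_{i_r})$ over $j$-subsets with the ratio $\binom{r}{j}/\binom{r-1}{j-1}=r/j$, which is equivalent to your reduction to $\mathbb{E}[\log x_{X_1}]$), apply Jensen for the inequality, and use Lemma~\ref{LEMMA:Lagrangian-multiplier} to get $y_i=x_i$ on the support for the final identity. No gaps.
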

\begin{proof}[Proof of Proposition~\ref{PROP:entropy-difference}]
    Let $(X_1, \ldots, X_{r})$ be the random edge with uniform ordering on $\mathcal{H}$ given by the proposition. 
    Observe that the proposition holds trivially for $j = r$, so we may assume that $j \in [r-1]$. 
    Given the probability distribution in~\eqref{equ:X1-Xr-prob-def}, it is easy to see that for every $(i_1, \ldots, i_{j}) \in \partial_{r-j}\vec{\mathcal{H}}$, 
    \begin{align*}
        \mathbb{P}\left[(X_1, \ldots, X_{j}) = (i_1, \ldots, i_{j})\right]
        = y_{i_1, \ldots, i_{j}}. 
    \end{align*}
    For $j \in [r]$, let $\mathrm{Supp}_{j} = \mathrm{Supp}(X_1, \ldots, X_j)$. 
    Straightforward calculations show that 
    \begin{align*}
        & \sum_{(i_1, \ldots, i_r) \in \mathrm{Supp}_r} \frac{x_{i_1} \cdots x_{i_r}}{\beta} \cdot \log (x_{i_1} \cdots x_{i_{r}}) \\
        & = \frac{1}{\binom{r-1}{j-1}} \cdot \sum_{(i_1, \ldots, i_r) \in \mathrm{Supp}_r} \frac{x_{i_1} \cdots x_{i_r}}{\beta} \cdot \sum_{\{k_1, \ldots, k_j\} \subseteq \{i_1, \ldots, i_r\}}\log (x_{k_1} \cdots x_{k_{j}}) \\
        & = \frac{\binom{r}{j}}{\binom{r-1}{j-1}} \cdot \sum_{(i_1, \ldots, i_j) \in \mathrm{Supp}_j}\frac{x_{i_1} \cdots x_{i_j}}{\beta} \cdot \log (x_{i_1} \cdots x_{i_j})  \cdot \sum_{(i_{j+1}, \ldots, i_r) \in L_{\vec{\mathcal{H}}}(i_1, \ldots, i_j)} x_{i_{j+1}} \cdots x_{i_{r}} \\
        & = \frac{r}{j} \cdot \sum_{(i_1, \ldots, i_j) \in \mathrm{Supp}_j} y_{i_1, \ldots, i_{j}} \cdot \log (x_{i_1} \cdots x_{i_j}). 
    \end{align*}
    It follows that 
    \begin{align*}
        \mathbb{H}(X_1, \ldots, X_{r})
        & = - \sum_{(i_1, \ldots, i_r) \in \mathrm{Supp}_r} y_{i_1, \ldots, i_r} \cdot \log \frac{x_{i_1} \cdots x_{i_r}}{\beta} \\
        & = \sum_{(i_1, \ldots, i_r) \in \mathrm{Supp}_r} y_{i_1, \ldots, i_r} \cdot \log \beta - \sum_{(i_1, \ldots, i_r) \in \mathrm{Supp}_r} \frac{x_{i_1} \cdots x_{i_r}}{\beta} \cdot \log (x_{i_1} \cdots x_{i_{r}}) \\
        & = \log \beta - \frac{r}{j} \cdot \sum_{(i_1, \ldots, i_j) \in \mathrm{Supp}_j} y_{i_1, \ldots, i_{j}} \cdot \log (x_{i_1} \cdots x_{i_j}).
    \end{align*}
    Therefore, we have 
    \begin{align*}
        & \mathbb{H}(X_1, \ldots, X_{r}) - \frac{r}{j} \cdot \mathbb{H}(X_1, \ldots, X_{j}) \\
        & =  \log \beta - \frac{r}{j} \cdot \sum_{(i_1, \ldots, i_j) \in \mathrm{Supp}_j} y_{i_1, \ldots, i_{j}} \cdot \log (x_{i_1} \cdots x_{i_j}) + \frac{r}{j} \cdot \sum_{(i_1, \ldots, i_j) \in \mathrm{Supp}_j} y_{i_1, \ldots, i_{j}} \cdot \log y_{i_1, \ldots, i_{j}} \\
        & = \log \beta - \frac{r}{j} \cdot \sum_{(i_1, \ldots, i_j) \in \mathrm{Supp}_j} y_{i_1, \ldots, i_{j}} \cdot \log \frac{x_{i_1} \cdots x_{i_j}}{y_{i_1, \ldots, i_{j}}},
    \end{align*}
    which proves~\eqref{equ:entropy-diff-equal}. 

    The "In particular`` part follows easily from the following inequality (which follows from Jensen's inequality)
    \begin{align*}
        \sum_{(i_1, \ldots, i_j)} y_{i_1, \ldots, i_{j}} \cdot \log \frac{x_{i_1} \cdots x_{i_j}}{y_{i_1, \ldots, i_{j}}} 
        \le \log \sum_{(i_1, \ldots, i_j)} x_{i_1} \cdots x_{i_j} 
        = \log P_{\partial_{r-j}\vec{\mathcal{H}}}(x_1, \ldots, x_n).
    \end{align*}
    Finally, suppose that $(x_1, \ldots, x_n) \in \mathrm{Opt}(\mathcal{H})$. Then for every $j \in \mathrm{Supp}(\vec{x})$, it follows from Lemma~\ref{LEMMA:Lagrangian-multiplier} that 
    \begin{align*}
        y_{j}
         = \frac{x_j}{\beta} \cdot \sum_{(i_1, \ldots, i_{r-1}) \in L_{\vec{\mathcal{H}}}(j)} x_{i_1} \cdots x_{i_{r-1}} 
        & = \frac{x_j}{r! \cdot \lambda(\mathcal{H})} \cdot (r-1)! \cdot \partial_{j}P_{\mathcal{H}}(x_1, \ldots, x_n)\\
        & = \frac{x_j}{r! \cdot \lambda(\mathcal{H})} \cdot (r-1)! \cdot r \cdot \lambda(\mathcal{H})
        = x_j,
    \end{align*}
    which, combined with~\eqref{equ:entropy-diff-equal}, implies~\eqref{equ:entropy-equal-Lagrangian}. 
    This completes the proof of Proposition~\ref{PROP:entropy-difference}.
%
\end{proof}

\section{Uniqueness of the optimal solution}\label{SEC:uniqueness}
The goal of this section is to establish the following result, which is crucial for the proof of Theorem~\ref{THM:hypergraph-Mantel}.
\begin{proposition}\label{PROP:unique-solution}
    Let $n \ge r \ge 2$ be integers. 
    Suppose that $\mathcal{H}$ is a $2$-covered $\mathcal{T}_{r}$-free $r$-graph on $[n]$. 
    Then $\vec{x} = (x_1, \ldots, x_n) \in \mathrm{Opt}(\mathcal{H})$ iff 
    \begin{enumerate}[label=(\roman*)]
        \item $\mathrm{Supp}(\vec{x})$ has size $r$ and is an edge in $\mathcal{H}$, 
        \item $x_i = 1/r$ for every $i \in \mathrm{Supp}(\vec{x})$. 
    \end{enumerate}
\end{proposition}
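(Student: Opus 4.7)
The plan is to prove both directions of the equivalence, the necessary direction being the main content. For the \emph{sufficient} direction, if $\mathrm{Supp}(\vec{x}) = e \in \mathcal{H}$ and $x_i = 1/r$ on $e$, then $P_{\mathcal{H}}(\vec{x}) = \prod_{i \in e} x_i = 1/r^r$, which matches $\lambda(\mathcal{H})$ by Theorem~\ref{THM:CY24-Lagrangian-density} (noting $\mathcal{H}$ contains an edge since it is 2-covered and $n \ge r$).

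For the \emph{necessary} direction, fix $\vec{x} \in \mathrm{Opt}(\mathcal{H})$ with $S = \mathrm{Supp}(\vec{x})$. The first step establishes the pointwise bound $x_j \le 1/r$ for every $j \in S$. Lemma~\ref{LEMMA:Lagrangian-multiplier} gives $\partial_j P_{\mathcal{H}}(\vec{x}) = r\lambda(\mathcal{H}) = 1/r^{r-1}$, and this partial derivative equals $P_{L_{\mathcal{H}}(j)}(\vec{x}|_{V \setminus \{j\}})$. Since $L_{\mathcal{H}}(j)$ is $\mathcal{T}_{r-1}$-free by Fact~\ref{FACT:link-triangle-free}\ref{FACT:link-triangle-free-1}, Theorem~\ref{THM:CY24-Lagrangian-density} bounds $\lambda(L_{\mathcal{H}}(j)) \le 1/(r-1)^{r-1}$. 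Rescaling $\vec{x}|_{V \setminus \{j\}}$ to the simplex of dimension $n-2$ yields $1/r^{r-1} \le (1-x_j)^{r-1}/(r-1)^{r-1}$, hence $x_j \le 1/r$. Therefore $|S| \ge r$, with equality forcing $\vec{x}$ uniform on $S$.

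The second step invokes the $(n,r,r-1)$-system structure, which is guaranteed by Fact~\ref{FACT:link-triangle-free}\ref{FACT:link-triangle-free-2}. For the random ordered edge $(X_1, \ldots, X_r)$ built from $\vec{x}$ via~\eqref{equ:X1-Xr-prob-def}, unique $(r-1)$-set extension forces $\mathbb{H}(X_r \mid X_1, \ldots, X_{r-1}) = 0$, and hence $\mathbb{H}(X_1, \ldots, X_r) = \mathbb{H}(X_1, \ldots, X_{r-1})$; combined with~\eqref{equ:entropy-equal-Lagrangian} this yields $\mathbb{H}(X_1, \ldots, X_{r-1}) = r\mathbb{H}(X_1) + \log(r!/r^r)$. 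Applying Proposition~\ref{PROP:entropy-difference} with $j = r-1$ and substituting this identity produces an equation of the form $D_{r-1} = \log(r!/r^r) + \mathbb{H}(X_1)$, where $D_{r-1}$ is the $y$-weighted sum in~\eqref{equ:entropy-diff-equal}. Combining the Jensen upper bound on $D_{r-1}$ with the explicit formula~\eqref{equ:X1-Xj-prob-def} and the observation that every $i \in S$ must lie in some edge of $\mathcal{H}[S]$ (otherwise $\partial_i P_{\mathcal{H}}(\vec{x}) = 0$, contradicting the Lagrangian identity at $i$), one forces $\vec{x}$ to be uniform on $S$. Writing $x_i = 1/s$ with $s = |S| \ge r$, the equation $P_{\mathcal{H}}(\vec{x}) = |\mathcal{H}[S]|/s^r = 1/r^r$ gives $|\mathcal{H}[S]| = (s/r)^r$, and the $(s,r,r-1)$-system bound $|\mathcal{H}[S]| \le \binom{s}{r-1}/r$, together with integrality of $|\mathcal{H}[S]|$ and (for large $r$) the $\mathcal{T}_r$-free restriction, forces $s = r$; then $|\mathcal{H}[S]| = 1$ implies $S \in \mathcal{H}$.

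The main obstacle is the uniformity step in the third paragraph. Standard entropy inequalities (subadditivity, Han's inequality) only pin down $|S| \ge r$, which is the wrong direction. The sharper conclusion rests on the equality condition of Jensen's inequality in Proposition~\ref{PROP:entropy-difference} at the optimum, where formula~\eqref{equ:X1-Xj-prob-def} combined with the unique extension property of the $(n,r,r-1)$-system collapses the inner sum to the single term $x_{j(T)}$; tightness of Jensen then becomes the rigid constraint that $x_{j(T)}$ is constant over all $(r-1)$-tuples $T$ in the support, which in turn (since every $i \in S$ arises as some $j(T)$) forces uniformity. Once uniformity is established, the elementary counting comparison between $(s/r)^r$ and $\binom{s}{r-1}/r$ closes the argument.
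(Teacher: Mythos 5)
Your sufficiency direction and your first step (the link argument giving $x_j \le 1/r$, which is exactly the paper's Claim~\ref{CLAIM:xi-small}) are correct, and your observation that the $(n,r,r-1)$-system structure forces $\mathbb{H}(X_r \mid X_1,\ldots,X_{r-1})=0$ is also valid. The gap is in the uniformity step. The equation you derive, $D_{r-1}=\log(r!/r^r)+\mathbb{H}(X_1)$, carries no information: by \eqref{equ:X1-Xj-prob-def} and the unique-extension property one has $x_{i_1}\cdots x_{i_{r-1}}/y_{i_1,\ldots,i_{r-1}}=\beta/x_{\psi(i_1,\ldots,i_{r-1})}$, and the Lagrange-multiplier computation (the paper's \eqref{equ:summation-links}) shows $\sum_T y_T\log x_{\psi(T)}=\sum_j x_j\log x_j=-\mathbb{H}(X_1)$, so $D_{r-1}=\log\beta+\mathbb{H}(X_1)$ holds identically for \emph{every} optimal vector. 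In particular it does not assert that $D_{r-1}$ attains its Jensen upper bound $\log P_{\partial_{1}\vec{\mathcal{H}}}(\vec{x})$, so the equality condition of Jensen is never triggered and the claimed rigidity (``$x_{\psi(T)}$ constant over $T$'') does not follow. What is actually needed is the bound $\mathbb{H}(X_1)\le\log r$, and plain subadditivity only gives $\mathbb{H}(X_1)\le\log(r^r/r!)$, which is strictly weaker for $r\ge 3$.

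The missing ingredient is the paper's Claim~\ref{CLAIM:alpha-i-value}: setting $\alpha_i=2^{\mathbb{H}(X_i\mid X_{i+1},\ldots,X_r)-\mathbb{H}(X_i)}$, the $\mathcal{T}_r$-freeness enters through the superadditivity $\alpha_i+\alpha_j\le\alpha_{i+j}$ of {\cite[Lemma~7.2]{CY24}}, which together with $\alpha_1\cdots\alpha_r=\beta=r!/r^r$ and the equality analysis of Lemma~\ref{LEMMA:stability-inequality-a} forces $\alpha_i=i/r$, hence $\mathbb{H}(X_1,\ldots,X_{r-1})-(r-1)\mathbb{H}(X_1)=\log(r!/r^{r-1})$; comparing this with the identity above yields $\sum_j x_j\log x_j=-\log r$, and only then do Claim~\ref{CLAIM:xi-small} and Lemma~\ref{LEMMA:entropy-inequlity} give uniformity with value exactly $1/r$ (so that $|S|=r$ is automatic and your closing count $(s/r)^r$ versus $\binom{s}{r-1}/r$ --- which by itself does not exclude $s=r+1$ for $r\ge 5$ --- is not needed). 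Without some substitute for this superadditivity input, the ingredients you use (optimality, the $(n,r,r-1)$-system property, $x_j\le 1/r$, and $\lambda=1/r^r$) do not suffice to pin down the support.
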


Before proving Proposition~\ref{PROP:unique-solution}, let us present several useful lemmas. 

The following result can be derived directly from the proofs Lemma~7.3 and Theorem~7.1 in~\cite{CY24}.
\begin{lemma}[{\cite{CY24}}]\label{LEMMA:stability-inequality-a}
    Suppose that $0 \le x_1 \le \cdots \le x_r = 1$ are real numbers satisfying $x_i + x_j \le x_{i+j}$ for all $1 \le i \le j \le r$. 
    Then $x_1 \cdots x_r \le r!/r^r$, 
    with equality holding iff $x_i = i/r$ for $i \in [r]$. 
\end{lemma}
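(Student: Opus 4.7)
Plan: The lemma is a constrained optimization. The strategy combines strict log-concavity of the objective with a line-segment perturbation argument to reduce the problem to a scalar auxiliary inequality, which is then established via Abel summation.

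The feasible set $R$ defined by the hypotheses is a convex polytope, and $F(\vec x) = \sum_{i=1}^r \log x_i$ is strictly concave on $(0, \infty)^r$; since $\prod x_i = 0 < r!/r^r$ whenever some $x_i = 0$, the maximum is unique and attained in the interior. The candidate is $\vec x^* = (1/r, 2/r, \ldots, 1) \in R$, with $\prod x_i^* = r!/r^r$ and every superadditivity constraint tight. For an arbitrary feasible $\vec x \ne \vec x^*$, the segment $\tilde x(t) = (1-t)\vec x + t \vec x^*$ stays in $R$, because the superadditivity constraint at $\tilde x(t)$ reduces to $(1-t)(x_i + x_j - x_{i+j}) \le 0$, and $f(t) := \log \prod \tilde x_i(t)$ is strictly concave in $t$. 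A direct computation gives $f'(1) = r\bigl(1 - \sum_{i=1}^r x_i/i\bigr)$. The lemma therefore follows from the auxiliary claim
\[
\sum_{i=1}^r \frac{x_i}{i} \le 1, \quad\text{with equality iff}\quad x_i = i/r \text{ for all } i,
\]
because then $f'(1) \ge 0$ combined with strict concavity of $f$ forces $f(0) \le f(1) = \log(r!/r^r)$, with equality only when $\vec x = \vec x^*$.

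To prove the auxiliary claim, I iterate superadditivity. For each $i \in [r-1]$, summing $x_j + x_{i-j} \le x_i$ over $j \in [i-1]$ gives $2 X_{i-1} \le (i-1) x_i$ where $X_k := \sum_{j=1}^k x_j$, hence $X_i \le (i+1) x_i/2$. Summing the antipodal bounds $x_i + x_{r-i} \le 1$ over $i \in [r-1]$ yields $X_{r-1} \le (r-1)/2$, and so $X_r \le (r+1)/2$. By Abel summation,
\[
\sum_{i=1}^r \frac{x_i}{i} = \frac{X_r}{r} + \sum_{i=1}^{r-1} \frac{X_i}{i(i+1)} \le \frac{r+1}{2r} + \frac{1}{2}\sum_{i=1}^{r-1} \frac{x_i}{i},
\]
where the last inequality uses $X_i/(i(i+1)) \le x_i/(2i)$. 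Writing $T := \sum_{i=1}^r x_i/i$ and substituting $\sum_{i=1}^{r-1} x_i/i = T - 1/r$, this rearranges to $T \le 1$. For equality to hold, both families of constraints must be tight: $x_i + x_{r-i} = 1$ for every $i \in [r-1]$ and $x_j + x_{i-j} = x_i$ for every $i \in [r-1]$ and $j \in [i-1]$. Setting $j = 1$ in the latter telescopes to $x_i = i x_1$, and then the former forces $x_1 = 1/r$, giving $x_i = i/r$.

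I expect the main technical obstacle to be setting up the Abel-summation identity with exactly the right two families of constraints; once the reduction is carried out, both the main lemma (via strict concavity) and the equality case (via tracing equality conditions through the Abel argument) follow cleanly.
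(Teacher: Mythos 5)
Your proof is correct, and it is necessarily a different route from the paper's, because the paper gives no proof at all: it defers to ``the proofs of Lemma~7.3 and Theorem~7.1 in [CY24]'' (the equality characterisation is exactly the part that requires re-inspecting those proofs rather than quoting the lemma statement, and your argument supplies it from scratch). Your two main steps both check out. First, the segment $\tilde{x}(t)=(1-t)\vec{x}+t\vec{x}^{*}$ stays in the (convex) feasible polytope, $f(t)=\sum_i\log\tilde{x}_i(t)$ is concave and strictly so unless $\vec{x}=\vec{x}^{*}$, and indeed $f'(1)=\sum_i\frac{i/r-x_i}{i/r}=r\bigl(1-\sum_i x_i/i\bigr)$; once $f'(1)\ge 0$, concavity gives $f'\ge 0$ on $[0,1]$, hence $f(0)\le f(1)=\log(r!/r^r)$, and equality forces $f$ constant, which strict concavity rules out for $\vec{x}\ne\vec{x}^{*}$. (A side remark: this means the equality analysis inside your auxiliary claim is not actually needed for the lemma, though it is correct.) Second, the auxiliary inequality $T:=\sum_{i=1}^{r}x_i/i\le 1$ is proved correctly: summing $x_j+x_{i-j}\le x_i$ over $j\in[i-1]$ gives $X_i\le (i+1)x_i/2$, the antipodal sums give $X_r\le (r+1)/2$, and the Abel identity $T=X_r/r+\sum_{i=1}^{r-1}X_i/(i(i+1))$ then yields the self-improving bound $T\le \tfrac{1}{2}+\tfrac{T}{2}$. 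Two minor presentational points: you implicitly (and correctly) read the hypothesis as applying only when $i+j\le r$; and the opening claim that the maximum is ``attained in the interior'' should be ``attained at a point with all coordinates positive'' --- but nothing in the argument uses topological interiority, so this is cosmetic. Compared with simply citing [CY24], your proof has the advantage of being self-contained and of isolating a clean linear inequality ($\sum x_i/i\le 1$) whose tightness structure transparently pins down the extremal point.
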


\begin{lemma}\label{LEMMA:entropy-inequlity}
    Suppose that $\vec{x} = (x_1, \ldots, x_n) \in \mathbb{S}^{n-1}$ satisfies 
    \begin{align}\label{equ:LEMMA:entropy-inequlity-assump}
        \max_{i\in [n]} x_i \le \frac{1}{r}
        \quad\text{and}\quad 
        \sum_{i\in\mathrm{Supp}(\vec{x})} x_i \log x_i = - \log r. 
    \end{align}
    Then, up to a permutation of the indices, we have  
    \begin{align*}
        (x_1, \ldots, x_n)
        = (1/r, \ldots, 1/r, 0, \ldots, 0).
    \end{align*}
\end{lemma}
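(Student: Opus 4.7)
The plan is to recognize that the hypothesis $\sum_{i \in \mathrm{Supp}(\vec{x})} x_i \log x_i = -\log r$ is exactly the statement that the Shannon entropy of $\vec{x}$ (viewed as a probability distribution) equals $\log r$. I would then prove, under the ceiling constraint $\max_i x_i \le 1/r$, the sharp lower bound $-\sum_i x_i \log x_i \ge \log r$ with equality characterized by exactly $r$ coordinates equal to $1/r$.

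The main tool will be concavity of the function $f(t) = -t \log t$ on $[0, 1/r]$. Since $f(0) = 0$ and $f(1/r) = (1/r)\log r$, the chord connecting these two endpoints is the linear function $L(t) = t \log r$. Strict concavity of $f$ on $(0, \infty)$ implies
\begin{align*}
f(t) \ge L(t) = t \log r \quad \text{for every } t \in [0, 1/r],
\end{align*}
with equality holding if and only if $t \in \{0, 1/r\}$. Applying this pointwise with $t = x_i$ (which is legal because of the hypothesis $\max_i x_i \le 1/r$) and summing over $i$,
\begin{align*}
-\sum_{i \in \mathrm{Supp}(\vec{x})} x_i \log x_i
= \sum_{i=1}^{n} f(x_i)
\ge \sum_{i=1}^{n} x_i \log r
= \log r,
\end{align*}
where the last equality uses $\vec{x} \in \mathbb{S}^{n-1}$.

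The hypothesis forces equality throughout, so $f(x_i) = L(x_i)$ for every $i$, which by the equality condition above forces $x_i \in \{0, 1/r\}$ for every $i \in [n]$. Combined with $\sum_{i} x_i = 1$, exactly $r$ of the coordinates equal $1/r$ and the remaining $n-r$ vanish, giving the claimed form of $\vec{x}$ up to permutation. I do not expect any real obstacle here: the concavity-plus-chord trick is standard, and the only delicate point is making sure strict concavity is invoked to rule out intermediate values of $x_i$ in $(0, 1/r)$, which follows because $f'' (t) = -1/(t \ln 2) < 0$ on $(0, \infty)$.
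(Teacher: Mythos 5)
Your proof is correct and is essentially the paper's argument: both rest on the pointwise inequality $x_i\log x_i \le x_i\log(1/r)$ for $x_i \in [0,1/r]$ (your chord bound $f(t)\ge L(t)$ is exactly this), summed over $i$, with equality forcing each $x_i\in\{0,1/r\}$; the paper merely packages it as a contradiction starting from a single coordinate in $(0,1/r)$. The only cosmetic difference is that you invoke strict concavity of $-t\log t$, whereas the inequality and its equality case already follow from monotonicity of $\log$ alone.
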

\begin{proof}[Proof of Lemma~\ref{LEMMA:entropy-inequlity}]
    Suppose to the contrary that this lemma fails. Then there exists $i \in [n]$ such that $0 < x_i < 1/r$.
    By symmetry, we may assume that $i = 1$. 
    Since $\max_{i\in [n]} x_i \le 1/r$ (by~\eqref{equ:LEMMA:entropy-inequlity-assump}), we have 
    \begin{align*}
        \sum_{i\in\mathrm{Supp}(\vec{x})} x_i \log x_i
        & = x_1 \log x_1 + \sum_{i\in\mathrm{Supp}(\vec{x})\setminus \{1\}} x_i \log x_i \\
        & \le x_1 \left(\log x_1 - \log 1/r\right) 
        + x_1 \log 1/r + \sum_{i\in\mathrm{Supp}(\vec{x})\setminus \{1\}} x_i \log 1/r \\
        & = x_1 \left(\log x_1 - \log 1/r\right) +  \log 1/r 
        < \log 1/r,
    \end{align*}
    which contradicts the assumption that $\sum_{i\in\mathrm{Supp}(\vec{x})} x_i \log x_i = - \log r$. 
\end{proof}

We are now ready to prove Proposition~\ref{PROP:unique-solution}. 
\begin{proof}[Proof of Proposition~\ref{PROP:unique-solution}]
    Let $\mathcal{H}$ be a $2$-covered $\mathcal{T}_{r}$-free $r$-graph on $[n]$. 
    Fix $\vec{x} = (x_1, \ldots, x_n) \in \mathrm{Opt}(\mathcal{H})$. 
    It follows from Theorem~\ref{THM:CY24-Lagrangian-density} that 
    \begin{align*}
        \beta
        \coloneqq r! \cdot P_{\mathcal{H}}(x_1, \ldots, x_n)
        = r! \cdot \lambda(\mathcal{H})
        = r!/r^r. 
    \end{align*}
   Additionally, it follows from Lemma~\ref{LEMMA:Lagrangian-multiplier} that for every $j \in \mathrm{Supp}(\vec{x})$, 
    \begin{align}\label{equ:link-Lagrange-Multiplier}
        \sum_{e\in L_{\mathcal{H}}(j)} \prod_{i \in e} x_i 
        =\partial_{j}P_{\mathcal{H}}(x_1, \ldots, x_n)
        = r\cdot  \lambda(\mathcal{H})
        = 1/r^{r-1}. 
    \end{align}
    %
    Let $(X_1, \ldots, X_{r})$ be the random edge  with uniform ordering on $\mathcal{H}$ whose probability distribution is given by~\eqref{equ:X1-Xr-prob-def}. 
    Then for every $j \in [r]$ and for every $(i_1, \ldots, i_j) \in \partial\vec{\mathcal{H}}$, we have 
    \begin{align*}
        \mathbb{P}\left[(X_1, \ldots, X_{j}) = (i_1, \ldots, i_{j})\right]
        = y_{i_1, \ldots, i_{j}},
    \end{align*}
    where $y_{i_1, \ldots, i_{j}}$ is given by~\eqref{equ:X1-Xj-prob-def}. 
    
    Moreover, by Proposition~\ref{PROP:entropy-difference}~\eqref{equ:entropy-equal-Lagrangian}, 
    \begin{align}\label{equ:entropy-X1-Xr-Lagrangian}
        \mathbb{H}(X_1, \ldots, X_r) - r\cdot \mathbb{H}(X_1)
        = \log \beta. 
    \end{align}
    For every $i \in [r]$, define  
    \begin{align*}
        \alpha_i  
        \coloneqq 2^{\mathbb{H}(X_i \mid X_{i+1}, \ldots, X_r) - \mathbb{H}(X_i)}. 
    \end{align*}
    By the chain rule (see e.g.~{\cite[Proposition~3.3]{CY24}}) and the symmetry of $(X_1, \ldots, X_{r})$ that, for every $j \in [r]$, 
    \begin{align*}
        \log(\alpha_r \cdots \alpha_{j})
        & = \sum_{i \in [j, r]} \left(\mathbb{H}(X_i \mid X_{i+1}, \ldots, X_r) - \mathbb{H}(X_i)\right)   \\
        & = \sum_{i \in [j, r]} \left(\mathbb{H}(X_i, X_{i+1}, \ldots, X_r) - \mathbb{H}(X_{i+1}, \ldots, X_r) - \mathbb{H}(X_i) \right)   \\
        & = \mathbb{H}(X_{j}, \ldots, X_r) - (r-j+1) \cdot \mathbb{H}(X_1). 
    \end{align*}
    By symmetry, this can be rewritten as, for every $j \in [r]$, 
    \begin{align}\label{equ:H-X1-Xj}
        \log(\alpha_r \cdots \alpha_{r-j+1})
        = \mathbb{H}(X_{1}, \ldots, X_j) - j \cdot \mathbb{H}(X_1). 
    \end{align}
    \begin{claim}\label{CLAIM:alpha-i-value}
        We have $\alpha_i = i/r$ for $i \in [r]$.
    \end{claim}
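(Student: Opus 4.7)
Setting $j = r$ in~\eqref{equ:H-X1-Xj} and combining with~\eqref{equ:entropy-X1-Xr-Lagrangian} yields the identity
\[
    \log(\alpha_1\alpha_2\cdots\alpha_r) = \mathbb{H}(X_1,\ldots,X_r) - r\,\mathbb{H}(X_1) = \log\beta = \log(r!/r^r),
\]
so $\prod_{i=1}^{r}\alpha_i = r!/r^r$. My plan is to verify that $(\alpha_1,\ldots,\alpha_r)$ satisfies the hypotheses of Lemma~\ref{LEMMA:stability-inequality-a} and then read off $\alpha_i=i/r$ from the uniqueness clause of that lemma.

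Three verifications are required. First, $\alpha_r = 1$ is immediate since the conditioning set in $\mathbb{H}(X_r\mid X_{r+1},\ldots,X_r)$ is empty, giving $\alpha_r = 2^{0} = 1$. Second, for monotonicity $0 \le \alpha_1 \le \alpha_2 \le \cdots \le \alpha_r$, one uses the symmetry of $(X_1,\ldots,X_r)$ to rewrite $\alpha_i = 2^{\mathbb{H}(X_1\mid X_2,\ldots,X_{r-i+1}) - \mathbb{H}(X_1)}$; as $i$ increases, fewer variables are conditioned on, and conditional entropy can only grow.

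The crucial and hardest step is the third: establishing the subadditivity $\alpha_i + \alpha_j \le \alpha_{i+j}$ for every $1 \le i \le j$ with $i + j \le r$. Both the $\mathcal{T}_r$-freeness and the $2$-coveredness of $\mathcal{H}$ must enter here; indeed, Fact~\ref{FACT:link-triangle-free}~\ref{FACT:link-triangle-free-2} tells us that $\mathcal{H}$ is an $(n,r,r-1)$-system, so fixing any $r-1$ coordinates of a random ordered edge determines the remaining one. I would reinterpret $\alpha_k \cdot 2^{\mathbb{H}(X_1)} = 2^{\mathbb{H}(X_1\mid X_2,\ldots,X_{r-k+1})}$ as an ``effective ordered link size'' and compare two such links whose conditioning tuples share a common tail of length $r-i-j$. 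The $\mathcal{T}_r$-free hypothesis, applied to the three edges arising from the two distinct extensions together with any third edge covered by their union, prevents their symmetric differences from being absorbed, and this translates into the additive comparison of effective link sizes underlying the inequality. In the extremal case the inequality is tight, since $i/r + j/r = (i+j)/r$.

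Granting these three verifications, Lemma~\ref{LEMMA:stability-inequality-a} forces $\prod_{i}\alpha_i \le r!/r^r$ with equality if and only if $\alpha_i = i/r$; equality holds by the first display, so the claim follows. The principal obstacle is step (iii): converting the purely combinatorial $\mathcal{T}_r$-freeness (together with the $(n,r,r-1)$-system structure) into a quantitative additive inequality on conditional effective support sizes. I expect the required bound to come from a Jensen-type estimate combined with the identities in Proposition~\ref{PROP:entropy-difference} or a close variant of the technique developed there.
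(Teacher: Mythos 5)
Your overall skeleton coincides with the paper's: derive $\prod_{i=1}^{r}\alpha_i=\beta=r!/r^r$ from \eqref{equ:entropy-X1-Xr-Lagrangian} and \eqref{equ:H-X1-Xj} with $j=r$, check $\alpha_r=1$ and the ordering $0\le\alpha_1\le\cdots\le\alpha_r$, establish the subadditivity $\alpha_i+\alpha_j\le\alpha_{i+j}$, and then conclude via the equality case of Lemma~\ref{LEMMA:stability-inequality-a}. The first two items are correct as you state them (and the monotonicity in fact already follows from subadditivity together with $\alpha_j\ge 0$, so your separate conditioning argument, while valid, is not needed).

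The genuine gap is the third step, which you yourself flag as the ``principal obstacle'': you do not prove $\alpha_i+\alpha_j\le\alpha_{i+j}$, you only describe a hoped-for mechanism. The sketch as written would not go through. In particular, the $(n,r,r-1)$-system property from Fact~\ref{FACT:link-triangle-free}~\ref{FACT:link-triangle-free-2} (i.e.\ $2$-coveredness) is not what powers this inequality --- it is used elsewhere in the paper, in the second half of the proof of Proposition~\ref{PROP:unique-solution}, to make the conditional distribution of $X_r$ given $(X_1,\ldots,X_{r-1})$ deterministic. The subadditivity of the $\alpha_i$'s uses only the $\mathcal{T}_r$-freeness, and it is not a ``Jensen-type estimate'' on effective link sizes: it is precisely~{\cite[Lemma~7.2]{CY24}}, whose proof constructs, from the single symmetric random edge $(X_1,\ldots,X_r)$, coupled random edges sharing prescribed coordinate blocks and shows that triangle-freeness forces an additive relation between the resulting conditional entropies. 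The paper simply invokes that lemma as a black box; without either citing it or reproducing its argument, your proof of the claim is incomplete at exactly the point where all of the combinatorial input enters.
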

    \begin{proof}[Proof of Claim~\ref{CLAIM:alpha-i-value}]
        By definition, we have $\alpha_r = 1$. Moreover, by~{\cite[Lemma~7.2]{CY24}}, for all $i,j \in [r]$ satisfying $i + j \le r$, we have 
        \begin{align}\label{equ:alphai-alphaj}
            \alpha_i + \alpha_j \le \alpha_{i+j} \le \alpha_r =1.
        \end{align}
        Additionally, combining~\eqref{equ:entropy-X1-Xr-Lagrangian} and~\eqref{equ:H-X1-Xj}, we obtain 
        \begin{align*}
            \alpha_1 \cdots \alpha_r
            = \beta
            = r!/r^r,  
        \end{align*}
        which, combined with~\eqref{equ:alphai-alphaj} and Lemma~\ref{LEMMA:stability-inequality-a}, implies that $\alpha_i = i/r$ for $i \in [r]$. 
    \end{proof}

By Claim~\ref{CLAIM:alpha-i-value} and~\eqref{equ:H-X1-Xj}, for every $j \in [r]$, we have 
\begin{align}\label{equ:H-x1-xi}
    \mathbb{H}(X_{1}, \ldots, X_j) - j \cdot \mathbb{H}(X_{1})
    = \log \frac{r(r-1) \cdots (r-j+1)}{r^{j}}.
\end{align}
    \begin{claim}\label{CLAIM:xi-small}
        We have $x_i \le 1/r$ for every $i \in [n]$.
    \end{claim}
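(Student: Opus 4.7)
The plan is to argue vertex-by-vertex: fix any $j_0 \in [n]$ and show that $x_{j_0} \le 1/r$. The inequality is trivial if $x_{j_0} = 0$, so assume $x_{j_0} > 0$, which puts $j_0 \in \mathrm{Supp}(\vec{x})$. The crucial identity is already on the page: equation~\eqref{equ:link-Lagrange-Multiplier} gives
\begin{equation*}
    \sum_{e \in L_{\mathcal{H}}(j_0)} \prod_{i \in e} x_i = \frac{1}{r^{r-1}}.
\end{equation*}
I would then bound the left-hand side from above using the Lagrangian of the link. By Fact~\ref{FACT:link-triangle-free}~\ref{FACT:link-triangle-free-1}, $L_{\mathcal{H}}(j_0)$ is a $\mathcal{T}_{r-1}$-free $(r-1)$-graph on $[n]\setminus\{j_0\}$, so Theorem~\ref{THM:CY24-Lagrangian-density} applied to the link yields $\lambda(L_{\mathcal{H}}(j_0)) \le 1/(r-1)^{r-1}$ when $r \ge 3$; when $r = 2$ the link is a $1$-graph and the same bound $\lambda(L_{\mathcal{H}}(j_0)) \le 1 = 1/(r-1)^{r-1}$ holds trivially.

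The next step is to rescale $\vec{x}$ to the simplex on $[n]\setminus\{j_0\}$. Since $\mathcal{H}$ is nonempty (being $2$-covered with $n \ge r \ge 2$), $\lambda(\mathcal{H}) = 1/r^{r} > 0$, so $x_{j_0} < 1$; set $y_i \coloneqq x_i/(1-x_{j_0})$ for $i \ne j_0$, giving a vector $\vec{y}$ in the simplex on $[n]\setminus\{j_0\}$. Because $P_{L_{\mathcal{H}}(j_0)}$ is homogeneous of degree $r-1$, the previous two displays combine into
\begin{equation*}
    \frac{1}{r^{r-1}} = \sum_{e \in L_{\mathcal{H}}(j_0)} \prod_{i \in e} x_i = (1-x_{j_0})^{r-1} \cdot P_{L_{\mathcal{H}}(j_0)}(\vec{y}) \le \frac{(1-x_{j_0})^{r-1}}{(r-1)^{r-1}}.
\end{equation*}
Rearranging yields $(1-x_{j_0})^{r-1} \ge ((r-1)/r)^{r-1}$, and therefore $x_{j_0} \le 1/r$.

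No substantive obstacle is expected: the argument is essentially the observation that the Lagrangian bound for the link supplied by Theorem~\ref{THM:CY24-Lagrangian-density} is exactly tight enough to control the weight at any single coordinate of an optimal solution. The only care needed is to check that rescaling is legal, i.e.\ that $x_{j_0} \neq 1$, which follows from $\lambda(\mathcal{H}) > 0$.
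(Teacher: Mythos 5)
Your proposal is correct and is essentially the paper's own argument: the paper fixes a hypothetical coordinate with $x_n > 1/r$, applies Fact~\ref{FACT:link-triangle-free}~\ref{FACT:link-triangle-free-1} and Theorem~\ref{THM:CY24-Lagrangian-density} to the rescaled link, and contradicts the Lagrange-multiplier identity~\eqref{equ:link-Lagrange-Multiplier}, which is exactly your chain of inequalities run as a proof by contradiction instead of directly. The only (harmless) difference is that you additionally spell out the degenerate cases $r=2$ and $x_{j_0}\in\{0,1\}$.
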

    \begin{proof}[Proof of Claim~\ref{CLAIM:xi-small}]
        Suppose to the contrary that $\max_{i\in [n]}x_i > 1/r$. By relabeling the vertices if necessary, we may assume that $x_n > 1/r$. 
        Let $\mathcal{G}$ denote the link of $n$ in $\mathcal{H}$. It follows from Fact~\ref{FACT:link-triangle-free}~\ref{FACT:link-triangle-free-1} that $\mathcal{G}$ is a $\mathcal{T}_{r-1}$-free $(r-1)$-graph on $[n-1]$. 
        For $i \in [n-1]$, let 
        \begin{align*}
            y_i 
            \coloneqq \frac{x_i}{x_1 + \cdots + x_{n-1}}
            = \frac{x_i}{1-x_n}. 
        \end{align*}
        Note that $(y_1, \ldots, y_{n-1}) \in \mathbb{S}^{n-2}$. 
        It follows from Theorem~\ref{THM:CY24-Lagrangian-density} that 
        \begin{align*}
            \frac{1}{(1-x_{n})^{r-1}} \cdot \sum_{e\in L_{\mathcal{H}}(n)} \prod_{i \in e} x_i
            = P_{\mathcal{G}}(y_1, \ldots, y_{n-1})
            \le \lambda(\mathcal{G})
            = \frac{1}{(r-1)^{r-1}}, 
        \end{align*}
        which implies that 
        \begin{align}\label{equ:link-polynomial-upper-bound}
            \sum_{e\in L_{\mathcal{H}}(n)} \prod_{i \in e} x_i
            \le \frac{(1-x_{n})^{r-1}}{(r-1)^{r-1}}
            < \frac{(1-1/r)^{r-1}}{(r-1)^{r-1}}
            = \frac{1}{r^{r-1}}.
        \end{align}
        On the other hand, since $x_n > 0$, it follows from Lemma~\ref{LEMMA:Lagrangian-multiplier} that 
        \begin{align*}
            \sum_{e\in L_{\mathcal{H}}(n)} \prod_{i \in e} x_i
            = r \cdot \lambda(\mathcal{H})
            = \frac{1}{r^{r-1}}, 
        \end{align*}
        which contradicts~\eqref{equ:link-polynomial-upper-bound}.
        This completes the proof of Claim~\ref{CLAIM:xi-small}. 
    \end{proof}
    Since $\mathcal{H}$ is $2$-covered and $\mathcal{T}_{r}$-free, it follows from Fact~\ref{FACT:link-triangle-free}~\ref{FACT:link-triangle-free-2} that $\mathcal{H}$ is an $(n,r,r-1)$-system. This means that for every $\{i_1, \ldots, i_{r-1}\} \in \partial\mathcal{H}$, there exists a unique vertex, denoted by $\psi(i_1, \ldots, i_{r-1})$, such that $\{i_1, \ldots, i_{r-1}, \psi(i_1, \ldots, i_{r-1})\} \in \mathcal{H}$.
    So, by definition~\eqref{equ:X1-Xj-prob-def}, for every $\{i_1, \ldots, i_{r-1}\} \in \mathrm{Supp}(X_1, \ldots, X_{r-1})$, 
    \begin{align}\label{equ:y-1-r-1-explicite}
        y_{i_1, \ldots, i_{r-1}}
        = \frac{x_{i_1} \cdots x_{i_{r-1}}}{\beta} \cdot \sum_{j \in L_{\vec{\mathcal{H}}}(i_1, \ldots, i_{r-1})} x_j
        = \frac{x_{i_1} \cdots x_{i_{r-1}}}{\beta} \cdot x_{\psi(i_1, \ldots, i_{r-1})}. 
    \end{align}
    Additionally, notice that 
    \begin{align}\label{equ:summation-links}
        & \sum_{(i_1, \ldots, i_{r-1}) \in \mathrm{Supp}(X_1, \ldots, X_{r-1})} \frac{x_{i_1} \cdots x_{i_{r-1}}}{\beta} \cdot x_{\psi(i_1, \ldots, i_{r-1})} \cdot \log x_{\psi(i_1, \ldots, i_{r-1})} \notag \\
        & = \frac{1}{\beta} \cdot \sum_{j \in \mathrm{Supp}(X_1)} x_{j} \cdot \log x_j \cdot  \sum_{(i_1, \ldots, i_{r-1}) \in L_{\vec{\mathcal{H}}}(j)} x_{i_1} \cdots x_{i_{r-1}} \notag \\
        & = \frac{(r-1)! \cdot r\cdot \lambda(\mathcal{H})}{\beta} \cdot \sum_{j \in \mathrm{Supp}(X_1)} x_{j} \cdot \log x_j
        = \sum_{j \in \mathrm{Supp}(X_1)} x_{j} \cdot \log x_j, 
    \end{align}
    where the second-to-last equality follows from Lemma~\ref{LEMMA:Lagrangian-multiplier}. 

    Combining~\eqref{equ:y-1-r-1-explicite} and~\eqref{equ:summation-links} with Proposition~\ref{PROP:entropy-difference}~\eqref{equ:entropy-diff-equal} (taking $j = r-1$), we obtain 
    \begin{align*}
        & \mathbb{H}(X_1, \ldots, X_{r}) - \frac{r}{r-1} \cdot \mathbb{H}(X_1, \ldots, X_{r-1}) \\
        & = \log \beta - \frac{r}{r-1} \cdot \sum_{(i_1, \ldots, i_{r-1})} \frac{x_{i_1} \cdots x_{i_{r-1}}}{\beta} \cdot x_{\psi(i_1, \ldots, i_{r-1})} \cdot \log \frac{\beta}{x_{\psi(i_1, \ldots, i_{r-1})}} \\
        & = \log \beta - \frac{r}{r-1} \cdot \sum_{(i_1, \ldots, i_{r-1})} \frac{x_{i_1} \cdots x_{i_{r-1}}}{\beta} \cdot x_{\psi(i_1, \ldots, i_{r-1})} \cdot \left(\log \beta - \log x_{\psi(i_1, \ldots, i_{r-1})} \right) \\
        & = \log \beta - \frac{r}{r-1} \log \beta  - \frac{r}{r-1} \cdot \sum_{(i_1, \ldots, i_{r-1})} \cdot \frac{x_{i_1} \cdots x_{i_{r-1}}}{\beta} x_{\psi(i_1, \ldots, i_{r-1})} \cdot \log x_{\psi(i_1, \ldots, i_{r-1})} \\
        & = - \frac{\log \beta}{r-1} + \frac{r}{r-1} \cdot \sum_{j}x_{j} \log x_j. 
    \end{align*}
    On the other hand, it follows from~\eqref{equ:H-X1-Xj} and Claim~\ref{CLAIM:alpha-i-value} that 
    \begin{align*}
        & \mathbb{H}(X_1, \ldots, X_{r}) - \frac{r}{r-1} \cdot \mathbb{H}(X_1, \ldots, X_{r-1}) \\
        & = \left(\mathbb{H}(X_1, \ldots, X_{r}) - r \cdot \mathbb{H}(X_1)\right) - \frac{r}{r-1} \cdot \left(\mathbb{H}(X_1, \ldots, X_{r-1}) - (r-1) \cdot \mathbb{H}(X_1)\right) \\
        & = \log \beta - \frac{r}{r-1} \cdot \log \frac{r (r-1)\cdots 2}{r^{r-1}} 
         = \log \beta - \frac{r}{r-1} \cdot \log(r \beta)
         = - \frac{\log \beta}{r-1} - \frac{r}{r-1} \log r.
    \end{align*}
    By combining these two inequalities, we obtain 
    \begin{align*}
        \sum_{j}x_{j} \log x_j = -\log r, 
    \end{align*}
    which, together with Claim~\ref{CLAIM:xi-small} and Lemma~\ref{LEMMA:entropy-inequlity}, implies the assertion in Proposition~\ref{PROP:unique-solution}.~\end{proof}

\section{Vertex-extendability of $\mathbb{T}_{r,1}$}\label{SEC:vtx-extend}
In this section, we establish the following result, which is another key ingredient in the proof Theorem~\ref{THM:hypergraph-Mantel}. 
\begin{proposition}\label{PROP:Tr-vtx-extend}
    Let $r \ge 2$ be an integer. Let $\mathfrak{K}_{r}^{r}$ denote the collection of all $r$-partite $r$-graphs. 
    The $r$-graph $\mathbb{T}_{r,1}$ is vertex-extendable with respect to $\mathfrak{K}_{r}^{r}$. 
\end{proposition}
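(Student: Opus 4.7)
My plan is to analyse the link $L = L_{\mathcal{H}}(v)$ relative to the $r$-partition $V_1, \ldots, V_r$ of $\mathcal{H} - v$, and show that every edge of $L$ is a transversal $(r-1)$-set missing a common part $V_i$; the vertex $v$ can then be added to $V_i$, so that $\mathcal{H}$ is itself $r$-partite.

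First, the minimum-degree hypothesis combined with the $r$-partiteness of $\mathcal{H} - v$ forces the parts to be nearly balanced. For each $u \in V_i$, every edge of $\mathcal{H} - v$ through $u$ is transversal, so $d_{\mathcal{H}-v}(u) \le \prod_{j \ne i}|V_j|$; combined with $d_{\mathcal{H}}(u) \ge (1/r^{r-1} - \varepsilon)n^{r-1}$ this yields $\prod_{j \ne i}|V_j| \ge (1/r^{r-1} - O(\varepsilon))n^{r-1}$ for every $i$, and an AM--GM computation gives $|V_i| = (1 + O(\varepsilon))n/r$. The same bookkeeping shows the \emph{near-completeness} of $\mathcal{H} - v$: at most $O(\varepsilon)n^r$ transversal $r$-sets on $V_1 \sqcup \cdots \sqcup V_r$ are missing from $\mathcal{H}$.

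Next, I classify each $e \in L$ by its intersection pattern with the partition: let $T_i \subseteq L$ collect the transversal $(r-1)$-edges missing $V_i$, and let $B = L \setminus \bigsqcup_i T_i$ collect the non-transversal edges. The plan is to show $B = \emptyset$ and $L = T_i$ for some $i$. First I rule out $B$. Given any $e^* \in B$, pick a part (say $V_1$) containing two vertices $u, u' \in e^*$, set $A' = e^* \cup \{v\}$, and use near-completeness to choose $(a_2, \ldots, a_r) \in V_2 \times \cdots \times V_r$, disjoint from $e^*$, such that both transversal $r$-sets $A = \{u, a_2, \ldots, a_r\}$ and $C = \{u', a_2, \ldots, a_r\}$ belong to $\mathcal{H}$. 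Then $\{A', A, C\}$ satisfies $|A \cap A'| = 1$, $|A' \cap C| = 1$, $|A \cap C| = r-1$, and $A \cap A' \cap C = \emptyset$, spanning $2r-1$ vertices, so it forms a copy of $\mathbb{T}_{r,1}$ in $\mathcal{H}$, a contradiction. Hence $B = \emptyset$ and some $|T_i|$ is $\Omega(n^{r-1})$; assume WLOG $|T_1|$ is largest.

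Finally, I rule out $T_j$ for $j \ne 1$. Given any $e_2 \in T_j$ (say $j = 2$), find $e_1 \in T_1$ vertex-disjoint from $e_2$ (possible since $|T_1| \gg n^{r-2}$), write $e_1 = \{u_2, \ldots, u_r\}$ and $e_2 = \{w_1, w_3, \ldots, w_r\}$ with $u_k, w_k \in V_k$, and set
\[
A = e_1 \cup \{v\}, \qquad B' = e_2 \cup \{v\}, \qquad C = \{u_2, \ldots, u_r, w_1\}.
\]
Direct verification shows $|A \cap B'| = 1$, $|A \cap C| = r-1$, $|B' \cap C| = 1$, $A \cap B' \cap C = \emptyset$, and $|A \cup B' \cup C| = 2r-1$, so $\{A, B', C\} \cong \mathbb{T}_{r,1}$ whenever $C \in \mathcal{H}$. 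Near-completeness guarantees $C \in \mathcal{H}$ for an abundance of choices of $e_1$, giving a contradiction; hence every $T_j$ with $j \ne 1$ is empty. The main obstacle I anticipate is arranging these $\mathbb{T}_{r,1}$-configurations so that the disjointness requirements among $A, B', C$ (and among $A', A, C$ in the non-transversal case) hold \emph{simultaneously} with the membership requirement on the transversal partner edges; tracking the $O(\varepsilon)$ error terms across the various shapes that $e^* \in B$ may take (different parts with repeated vertices, different vertex distributions) constitutes the technical core of the proof.
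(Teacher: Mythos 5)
Your proposal is correct and follows essentially the same route as the paper's proof: the two triangle configurations you build (the pair of transversal edges through $u,u'$ sharing an $(r-1)$-set to kill non-transversal link edges, and the $\{e_1\cup\{v\},\, e_2\cup\{v\},\, e_1\cup\{w_1\}\}$ configuration to kill a second transversal type) are exactly the paper's Claims 4.2 and 4.3, and your ``near-completeness'' counting is just a repackaging of the paper's inclusion--exclusion bound on $|L_{\mathcal{G}}(u)\cap L_{\mathcal{G}}(u')|$, which already supplies the simultaneous disjointness-plus-membership you flag as the technical core.
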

\begin{proof}[Proof of Proposition~\ref{PROP:Tr-vtx-extend}]
    Fix $r\ge 2$. Let $\varepsilon, \varepsilon_1, \varepsilon_2$ be sufficiently small constants such that $0 \le \varepsilon \ll \varepsilon_1 \ll \varepsilon_2$. Let $n$ be a sufficiently large integer. Let $\mathcal{H}$ be an $n$-vertex $\mathbb{T}_{r,1}$-free $r$-graph and $v_{\ast} \in V(\mathcal{H})$ be a vertex such that 
    \begin{enumerate}[label=(\roman*)]
        \item $\delta(\mathcal{H}) \ge \frac{n^{r-1}}{r^{r-1}} - \varepsilon n^{r-1}$, and 
        \item $\mathcal{H} - v_{\ast} \in \mathfrak{K}_{r}^{r}$, this is, $\mathcal{G} \coloneqq \mathcal{H} - v_{\ast}$ is $r$-partite. 
    \end{enumerate}
    We aim to show that $\mathcal{H} \in \mathfrak{K}_{r}^{r}$, this is, $\mathcal{H}$ is $r$-partite as well. 

    Let $V \coloneqq V(\mathcal{H}) \setminus \{v_{\ast}\}$ and let $V_1 \cup \cdots \cup V_{r} = V$ be a partition such that every edge in $\mathcal{H}$ intersects each $V_i$ in exactly one vertex. 
    Since 
    \begin{align*}
        \delta(\mathcal{G})
        \ge \delta(\mathcal{H}) - \binom{n-2}{r-2}
        \ge \frac{n^{r-1}}{r^{r-1}} - 2\varepsilon n^{r-1},
    \end{align*}
    straightforward calculations (see e.g.~{\cite[Corollary~2.3(a)]{LMR23unif}}) show that 
    \begin{align}\label{equ:Vi-size}
        \max_{i \in [r]}|V_i - n/r| \le \varepsilon_1 n.
    \end{align}
    \begin{claim}\label{CLAIM:Vi-independet}
        We have $|e\cap V_i| \le 1$ for every $e\in L_{\mathcal{H}}(v_{\ast})$ and for every $i \in [r]$. 
    \end{claim}
    \begin{proof}[Proof of Claim~\ref{CLAIM:Vi-independet}]
        Suppose to the contrary that there exist $e\in L_{\mathcal{H}}(v_{\ast})$ and $i \in [r]$ such that $|e\cap V_i| \ge 2$. 
        By symmetry, we may assume that $i = 1$. 
        Fix two vertices $u, w \in e\cap V_1$ and let $E_1 \coloneqq e \cup \{v_{\ast}\}$. 
        
        Let $\mathcal{K}$ denote $K_{r-1}^{r-1}[V_2, \ldots, V_{r}]$, the complete $(r-1)$-partite $(r-1)$-graph with parts $V_2, \ldots, V_{r}$. 
        Note that
        \begin{itemize}
            \item  both links $L_{\mathcal{G}}(u)$ and $L_{\mathcal{G}}(w)$ are subgraphs of $\mathcal{K}$, 
            \item $\min\{|L_{\mathcal{G}}(u)|,~|L_{\mathcal{G}}(w)|\} \ge \delta(\mathcal{G}) \ge \frac{n^{r-1}}{r^{r-1}} - 2\varepsilon n^{r-1}$, and 
            \item $|\mathcal{K}| \le \left(\frac{n}{r}  + \varepsilon_1 n\right)^{r-1}$ (which follow from~\eqref{equ:Vi-size}). 
        \end{itemize}
        So it follows from the Inclusion-Exclusion Principle that 
        \begin{align*}
            |L_{\mathcal{G}}(u) \cap L_{\mathcal{G}}(w)|
            \ge 2 \left(\frac{n^{r-1}}{r^{r-1}} - 2\varepsilon n^{r-1}\right) - \left(\frac{n}{r}  + \varepsilon_1 n\right)^{r-1} 
            \ge \frac{n^{r-1}}{r^{r-1}} - \varepsilon_2 n^{r-1}
            > (r-1) n^{r-2}.
        \end{align*}
        So there exists a set $e' \in L_{\mathcal{G}}(u) \cap L_{\mathcal{G}}(w)$ that is disjoint from $e$. Let $E_2 \coloneqq e' \cup \{u\}$ and $E_3 \coloneqq e' \cup \{w\}$. 
        Note that $\{E_1, E_2, E_3\} \subseteq \mathcal{H}$ and $\{E_1, E_2, E_3\}$ is a copy of $\mathbb{T}_{r,1}$, which contradicts the $\mathbb{T}_{r,1}$-freeness of $\mathcal{H}$.  
    \end{proof}
    By Claim~\ref{CLAIM:Vi-independet} and the Pigeonhole Principle, there exists an $(r-1)$-set $\{k_1, \ldots, k_{r-1}\} \subseteq [r]$ such that 
    \begin{align}\label{equ:Pigeonhole-link-v}
        |L_{\mathcal{H}}(v_{\ast}) \cap K^{r-1}_{r-1}[V_{k_1}, \ldots, V_{k_{r-1}}]|
        \ge \frac{|L_{\mathcal{H}}(v_{\ast})|}{r}
        \ge \frac{\delta(\mathcal{H})}{r}
        \ge \frac{n^{r-1}}{r^{r}} - \varepsilon n^{r-1}.
    \end{align}
    
    \begin{claim}\label{CLAIM:link-v-r-1-partite}
        We have $L_{\mathcal{H}}(v_{\ast}) \subseteq K^{r-1}_{r-1}[V_{k_1}, \ldots, V_{k_{r-1}}]$. 
    \end{claim}
    \begin{proof}[Proof of Claim~\ref{CLAIM:link-v-r-1-partite}]
        By symmetry, we may assume that $\{k_1, \ldots, k_{r-1}\} = \{1, \ldots, r-1\}$. 
        Suppose to the contrary that there exists another $(r-1)$-set $\{k'_1, \ldots, k'_{r-1}\} \subseteq [r]$ that is different from $\{1, \ldots, r-1\}$ such that $L_{\mathcal{H}}(v_{\ast}) \cap K^{r-1}_{r-1}[V_{k'_1}, \ldots, V_{k'_{r-1}}] \neq \emptyset$. 
        %
        By symmetry, we may assume that $\{k'_1, \ldots, k'_{r-1}\} = \{2, \ldots, r\}$. 
        
        Fix a set $e_1 \in L_{\mathcal{H}}(v_{\ast}) \cap K^{r-1}_{r-1}[V_{2}, \ldots, V_{r}]$ and let $u$ denote the vertex in $e_1 \cap V_{r}$.  Note that 
        \begin{itemize}
            \item $L_{\mathcal{G}}(u) \subseteq K^{r-1}_{r-1}[V_{1}, \ldots, V_{r-1}]$,
            \item $|L_{\mathcal{G}}(u)| \ge \delta(\mathcal{G}) \ge \frac{n^{r-1}}{r^{r-1}} - 2\varepsilon n^{r-1}$, and 
            \item $|K^{r-1}_{r-1}[V_{1}, \ldots, V_{r-1}]| 
            \le \left(\frac{n}{r} + \varepsilon_1 n\right)^{r-1}$ (which follows from~\eqref{equ:Vi-size}). 
        \end{itemize}
        So, it follows from~\eqref{equ:Pigeonhole-link-v} and the Inclusion-Exclusion Principle that 
        \begin{align*}
            & |L_{\mathcal{H}}(v_{\ast}) \cap L_{\mathcal{G}}(u) \cap K^{r-1}_{r-1}[V_{1}, \ldots, V_{r-1}]|  \\
            & \ge \frac{n^{r-1}}{r^{r}} - \varepsilon n^{r-1} + \frac{n^{r-1}}{r^{r-1}} - 2\varepsilon n^{r-1} - \left(\frac{n}{r} + \varepsilon_1 n\right)^{r-1} 
            > \frac{n^{r-1}}{2r^{r}}
            > (r-1)n^{r-2}. 
        \end{align*}
        Therefore, there exists a set $e_2 \in L_{\mathcal{H}}(v_{\ast}) \cap L_{\mathcal{G}}(u) \cap K^{r-1}[V_{1}, \ldots, V_{r-1}]$ that is disjoint from $e_1$. 
        Let $E_1 \coloneqq e_1 \cup \{v_{\ast}\}$, $E_2 \coloneqq e_2 \cup \{v_{\ast}\}$, and $E_3 \coloneqq e_2 \cup \{u\}$.  Note that $\{E_1, E_2, E_3\} \subseteq \mathcal{H}$ and $\{E_1, E_2, E_3\}$ is a copy of $\mathbb{T}_{r,1}$, which contradicts the $\mathbb{T}_{r,1}$-freeness of $\mathcal{H}$.  
    \end{proof}
    It follows from  Claim~\ref{CLAIM:link-v-r-1-partite} that $\mathcal{H}$ is $r$-partite, this proves that $\mathbb{T}_{r,1}$ is vertex-extendable with respect to $\mathfrak{K}_{r}^{r}$. 
\end{proof}

\section{Proof of Theorem~\ref{THM:hypergraph-Mantel}}\label{SEC:proof-hypergraph-mantel}
In this section, we prove Theorem~\ref{THM:hypergraph-Mantel}. Before that, let us prove the following result. 

\begin{proposition}\label{PROP:min-deg-r-partite}
    Let $r \ge 2$ be an integer.
    There exist $\varepsilon_{\ref{PROP:min-deg-r-partite}} = \varepsilon_{\ref{PROP:min-deg-r-partite}}(r) > 0$ and $N_{\ref{PROP:min-deg-r-partite}} = N_{\ref{PROP:min-deg-r-partite}}(r)$ such that the following holds for all $n \ge N_{\ref{PROP:min-deg-r-partite}}$. 
    Suppose that $\mathcal{H}$ is a symmetrized $\mathcal{T}_{r}$-free $r$-graph on $n$ vertices with $\delta(\mathcal{H}) \ge n^{r-1}/r^{r-1} - \varepsilon_{\ref{PROP:min-deg-r-partite}} n^{r-1}$. Then $\mathcal{H}$ is $r$-partite. 
\end{proposition}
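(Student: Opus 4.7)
The plan is to pass to the ``base'' graph: since $\mathcal{H}$ is symmetrized, write $\mathcal{H} = \mathcal{G}[V_1,\ldots,V_m]$ where $\mathcal{G}$ is a $2$-covered $r$-graph on $[m]$ with each $V_i$ nonempty. Then $\mathcal{G}$ is (isomorphic to) a subgraph of $\mathcal{H}$ and hence $\mathcal{T}_r$-free, so by Fact~\ref{FACT:link-triangle-free}~\ref{FACT:link-triangle-free-2} it is an $(m,r,r-1)$-system. Note $\mathcal{H}$ is $r$-partite if and only if $m = r$ (so that $\mathcal{G}$ is a single edge); hence it suffices to show $m = r$. Set $\vec{x} \coloneqq (|V_1|/n, \ldots, |V_m|/n)$. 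By Fact~\ref{FACT:Lagrangian}, the degree hypothesis becomes $\partial_j P_{\mathcal{G}}(\vec{x}) \ge 1/r^{r-1} - \varepsilon_{\ref{PROP:min-deg-r-partite}}$ for every $j \in [m]$, and Euler's identity $\sum_j x_j \partial_j P_{\mathcal{G}}(\vec{x}) = r P_{\mathcal{G}}(\vec{x})$ combined with $\lambda(\mathcal{G}) = 1/r^r$ (Theorem~\ref{THM:CY24-Lagrangian-density}) forces $P_{\mathcal{G}}(\vec{x}) \in [1/r^r - \varepsilon_{\ref{PROP:min-deg-r-partite}}/r,\, 1/r^r]$, i.e.\ $\vec{x}$ is nearly optimal for $P_{\mathcal{G}}$.

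Assume for contradiction that $m \ge r+1$. A stability argument built on Proposition~\ref{PROP:unique-solution} then produces some $\vec{x}^{\ast} \in \mathrm{Opt}(\mathcal{G})$ with $\|\vec{x} - \vec{x}^{\ast}\|_\infty$ small; by that proposition, $\vec{x}^{\ast}$ is supported on an edge $e_0 \in \mathcal{G}$ with value $1/r$ on each vertex of $e_0$. Fix $j_0 \in [m] \setminus e_0$. Since $\mathcal{G}$ is an $(m,r,r-1)$-system containing $e_0$, every edge of $\mathcal{G}$ through $j_0$ meets $e_0$ in at most $r-2$ vertices---otherwise an $(r-1)$-subset of $e_0$ would be contained in two edges. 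Consequently each such edge has at least one vertex in $[m] \setminus (e_0 \cup \{j_0\})$, at which $\vec{x}^{\ast}$ vanishes, giving $\partial_{j_0} P_{\mathcal{G}}(\vec{x}^{\ast}) = 0$. Continuity then forces $\partial_{j_0} P_{\mathcal{G}}(\vec{x})$ to be arbitrarily small as $\varepsilon_{\ref{PROP:min-deg-r-partite}} \to 0$, contradicting $\partial_{j_0} P_{\mathcal{G}}(\vec{x}) \ge 1/r^{r-1} - \varepsilon_{\ref{PROP:min-deg-r-partite}}$. Hence $m = r$, and $\mathcal{H}$ is $r$-partite.

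The main obstacle is the stability step, because $m$ is not a priori bounded in terms of $r$: already for $r = 3$, Steiner triple systems furnish $2$-covered $\mathcal{T}_3$-free $3$-graphs on arbitrarily many vertices, so one cannot directly take a compact limit of the pair $(\mathcal{G}, \vec{x})$. A workable route is first to establish a bound $m \le M(r)$ by combining the pointwise link-Lagrangian inequality $\partial_j P_{\mathcal{G}}(\vec{x}) \le (1-x_j)^{r-1}/(r-1)^{r-1}$---valid because $L_{\mathcal{G}}(j)$ is $\mathcal{T}_{r-1}$-free by Fact~\ref{FACT:link-triangle-free}~\ref{FACT:link-triangle-free-1} with Theorem~\ref{THM:CY24-Lagrangian-density} applied to it, and which yields $x_j \le 1/r + O(\varepsilon_{\ref{PROP:min-deg-r-partite}}^{1/(r-1)})$---with the system bound $|L_{\mathcal{G}}(j_0)| \le \binom{m-1}{r-2}/(r-1)$ and the explicit estimate
\[
    \partial_{j_0} P_{\mathcal{G}}(\vec{x}) \le |L_{\mathcal{G}}(j_0)| \cdot \max_{i \notin e_0} x_i \cdot \left(1/r + O(\varepsilon_{\ref{PROP:min-deg-r-partite}}^{1/(r-1)})\right)^{r-2}
\]
underlying the continuity argument in the previous paragraph. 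Once $m$ is bounded, compactness on the finite family of admissible $\mathcal{G}$ supplies the required stability and closes the proof.
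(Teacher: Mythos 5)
Your overall architecture matches the paper's: reduce to the base $2$-covered $r$-graph $\mathcal{G}$ on $[m]$, show $\vec{x}$ is near-optimal, invoke Proposition~\ref{PROP:unique-solution} plus compactness to place $\vec{x}$ near the scaled indicator of an edge $e_0\in\mathcal{G}$, and derive a contradiction at a vertex $j_0\notin e_0$ because every edge of the $(m,r,r-1)$-system through $j_0$ meets $e_0$ in at most $r-2$ vertices and hence carries a near-zero factor. That final step is exactly the paper's closing claim and is correct. You also correctly identify the main obstacle: compactness is unavailable until $m$ is bounded in terms of $r$.

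The gap is that your proposed route to the bound $m\le M(r)$ does not work. Every ingredient you list produces a \emph{lower} bound on $m$, not an upper bound: the link-Lagrangian inequality gives $x_j\le 1/r+O(\varepsilon)$, which together with $\sum_j x_j=1$ only forces $m\ge r-O(\varepsilon)$; the single-link estimate $\partial_{j_0}P_{\mathcal{G}}(\vec{x})\le |L_{\mathcal{G}}(j_0)|\cdot(\max_i x_i)^{r-1}$ combined with $\partial_{j_0}P_{\mathcal{G}}(\vec{x})\ge 1/r^{r-1}-\varepsilon$ again only bounds $|L_{\mathcal{G}}(j_0)|$ (hence $m$) from below. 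Moreover, the ``explicit estimate'' you invoke refers to $e_0$, which only exists after the compactness step you are trying to enable, so as written the argument is circular. The missing idea is a \emph{global} use of the $(m,r,r-1)$-system property: the links $L_{\mathcal{G}}(1),\ldots,L_{\mathcal{G}}(m)$ are pairwise disjoint as families of $(r-1)$-sets (an $(r-1)$-set lies in at most one edge, hence in the link of at most one vertex), so summing the degree condition over all $j\in[m]$ gives
\begin{align*}
    m\left(\frac{1}{r^{r-1}}-\varepsilon\right)
    \le \sum_{j\in[m]}\partial_j P_{\mathcal{G}}(\vec{x})
    \le P_{K_m^{r-1}}(x_1,\ldots,x_m)
    \le \binom{m}{r-1}\big/m^{r-1}
    \le \frac{1}{(r-1)!},
\end{align*}
where the penultimate inequality is Maclaurin's; this yields $m\le 2r^{r-1}/(r-1)!$ and unblocks the rest of your argument. (A minor additional point: the link-Lagrangian inequality via Theorem~\ref{THM:CY24-Lagrangian-density} applied to $L_{\mathcal{G}}(j)$ needs $r\ge 3$, so $r=2$ should be handled separately or avoided in that step.)
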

\begin{proof}[Proof of Proposition~\ref{PROP:min-deg-r-partite}]
    Let $\varepsilon > 0$ be sufficiently small and $n$ be sufficiently large. 
    Let $\mathcal{H}$ be a symmetrized $\mathcal{T}_{r}$-free $r$-graph on $n$ vertices with $\delta(\mathcal{H}) \ge n^{r-1}/r^{r-1} - \varepsilon n^{r-1}$.
    By the definition of symmetrized, there exists a $2$-covered $r$-graph $\mathcal{G}$ on $m \coloneqq v(\mathcal{G})$ vertices and a partition $V_1 \cup \cdots \cup V_{m} = V(\mathcal{H})$ such that $\mathcal{H} = \mathcal{G}[V_1, \ldots, V_m]$.
    For convenience, let us assume that the vertex set of $\mathcal{G}$ is $[m]$. 
    Let $x_i \coloneqq |V_i|/n$ for $i \in [m]$. 
    Note from Fact~\ref{FACT:Lagrangian} that 
    \begin{align}\label{equ:H-size-Lagrange}
        P_{\mathcal{G}}(x_1, \ldots, x_m)
        = |\mathcal{H}|/n^{r}
        \ge \frac{n}{r} \cdot \delta(\mathcal{H})/n^r
        \ge 1/r^{r} - \varepsilon/r.
    \end{align}
    Also, for every $j \in [m]$ and every $v\in V_i$, we have 
    \begin{align}\label{equ:degree-Lagrange}
        \sum_{e \in L_{\mathcal{G}}(j)} \prod_{i \in e} x_i
        = \partial_{j} P_{\mathcal{G}}(x_1, \ldots, x_m)
        = d_{\mathcal{H}}(v)/n^{r-1}
        \ge \delta(\mathcal{H})/n^{r-1}
        \ge 1/r^{r-1} - \varepsilon.
    \end{align}

    \begin{claim}\label{CLAIM:m-upper-bound-a}
        We have $m \le \frac{2r^{r-1}}{(r-1)!}$. 
    \end{claim}
    \begin{proof}[Proof of Claim~\ref{CLAIM:m-upper-bound-a}]
        It follows from Fact~\ref{FACT:link-triangle-free}~\ref{FACT:link-triangle-free-2} that $\mathcal{G}$ is an $(m,r,r-1)$-system. This implies that $L_{\mathcal{G}}(i) \cap L_{\mathcal{G}}(j) = \emptyset$ for all distinct $i, j \in [m]$. 
        Combining this with~\eqref{equ:degree-Lagrange}, we obtain 
        \begin{align*}
            m \left(1/r^{r-1} - \varepsilon\right)
            \le \sum_{i\in [m]} \sum_{e \in L_{\mathcal{G}}(i)} \prod_{j \in e} x_j
            \le P_{K_{m}^{r-1}}(x_1, \ldots, x_m)
            \le \binom{m}{r-1}/m^{r-1}
        \end{align*}
        where the last inequality follows from the Maclaurin Inequality. 
        
        It follows that 
        \begin{align*}
            m 
            \le \frac{1}{1/r^{r-1} - \varepsilon} \cdot \binom{m}{r-1}/m^{r-1}
            \le \frac{2r^{r-1}}{(r-1)!},
        \end{align*}
        which proves Claim~\ref{CLAIM:m-upper-bound-a}. 
    \end{proof}
    %
    By Proposition~\ref{PROP:unique-solution}, for every $(y_1, \ldots, y_m) \in \mathbb{S}^{m-1}$, we have  
    \begin{align*}
        P_{\mathcal{G}}(y_1, \ldots, y_m)
        \le r!/r^r, 
    \end{align*}
    with equality holding iff there exists some $r$-set $J \subseteq [m]$ with $J \in \mathcal{G}$ such that $y_i = 1/r$ for every $i \in J$. 
    So, by compactness (see e.g. the proof of~{\cite[Theorem~1.11]{HLZ24}}), there exists $\delta  = \delta(\varepsilon, m) > 0$ such that  
    \begin{align}\label{equ:L-infty-distance}
        \max_{i\in [m]} |x_i - \hat{x}_i| \le \delta,
    \end{align}
    where $\hat{x}_i = 1/r$ iff $i \in J$. 
    Note that we can choose $\varepsilon > 0$ to be sufficiently small at the beginning so that $\delta > 0$ is also sufficiently small. 

    \begin{claim}\label{CLAIM:m-upper-bound-b}
        We have $m  = r$. 
    \end{claim}
    \begin{proof}[Proof of Claim~\ref{CLAIM:m-upper-bound-b}]
        Suppose to the contrary that $m \ge r+1$. 
        Applying~\eqref{equ:degree-Lagrange} to the vertex $m$, we obtain 
        \begin{align*}
            \sum_{e \in L_{\mathcal{G}}(m)} \prod_{i \in e} x_i
            \ge 1/r^{r-1} - \varepsilon
            \ge \frac{1}{2r^{r-1}}. 
        \end{align*}
        Since $\mathcal{G}$ is an $(m,r,r-1)$-system and $J \in \mathcal{G}$, for each term in $\sum_{e \in L_{\mathcal{G}}(m)} \prod_{i \in e} x_i$, the index of at least one $x_i$ comes from the set $[m]\setminus J$. Therefore, by Claim~\ref{CLAIM:m-upper-bound-a} and~\eqref{equ:L-infty-distance}, 
        \begin{align*}
            \sum_{e \in L_{\mathcal{G}}(m)} \prod_{i \in e} x_i
            \le \binom{m-1}{r-1} \cdot \delta 
            \le \binom{2r^{r-1}/(r-1)!}{r-1} \cdot \delta 
            < \frac{1}{2r^{r-1}}, 
        \end{align*}
        contradicting the previous inequality. 
    \end{proof}
    It follows from Claim~\ref{CLAIM:m-upper-bound-b} that $\mathcal{G} \cong K_{r}^{r}$, and hence, $\mathcal{H}$ is $r$-partite. 
    This completes the proof of Proposition~\ref{PROP:min-deg-r-partite}. 
\end{proof}

Next, we present the proof of Theorem~\ref{THM:hypergraph-Mantel}. 
\begin{proof}[Proof of Theorem~\ref{THM:hypergraph-Mantel}]
    Fix $r \ge 2$. Let $\mathfrak{S}$ denote the collection of all $2$-covered $\mathcal{T}_{r}$-free $r$-graphs. 
    Let 
    \begin{align*}
        \mathfrak{H}
        \coloneqq \left\{\mathcal{H} \colon \text{$\mathcal{H}$ is $\mathcal{G}$-colorable for some $\mathcal{G} \in \mathfrak{S}$}\right\}. 
    \end{align*}
    Let us first prove that $\mathcal{T}_{r}$ is degree-stable with respect to $\mathfrak{H}$.
    
    It is clear from the definition that $\mathfrak{H}$ is hereditary and contains all symmetrized $\mathcal{T}_{r}$-free $r$-graphs. 
    By Fact~\ref{FACT:Delta-blowup-invariant}, $\mathcal{T}_{r}$ is blowup-invariant. 
    So, by Theorem~\ref{THM:LMR-vtx-extend}~\ref{THM:LMR-vtx-extend-1}, it remains to show that $\mathcal{T}_{r}$ is vertex-extendable with respect to $\mathfrak{H}$. 

    Let $\varepsilon > 0$ be a sufficiently small constant and $n$ be a sufficiently large integer. 
    Let $\mathcal{H}$ be an $n$-vertex $\mathcal{T}_{r}$-free $r$-graph on $n$ vertices with $\delta(\mathcal{H}) \ge n^{r-1}/r^{r-1} - \varepsilon n^{r-1}$. 
    Suppose that $v_{\ast} \in V(\mathcal{H})$ is a vertex such that $\mathcal{G} \coloneqq \mathcal{H} - v_{\ast}$ is contained in $\mathfrak{H}$. 
    Note that $\mathcal{G} \in \mathfrak{H}$ and 
    \begin{align*}
        \delta(\mathcal{G})
        \ge \delta(\mathcal{H}) - \binom{n-2}{r-2}
        \ge n^{r-1}/r^{r-1} - 2 \varepsilon n^{r-1}. 
    \end{align*}
    So it follows from Proposition~\ref{PROP:min-deg-r-partite} that $\mathcal{G}$ is $r$-partite, that is, $\mathcal{G} \in \mathfrak{K}_{r}^{r}$. 
    By Proposition~\ref{PROP:Tr-vtx-extend}, the $r$-graph $\mathbb{T}_{r,1} \in \mathcal{T}_{r}$ is vertex-extendable with respect to $\mathfrak{K}_{r}^{r}$. Therefore, by the definition of vertex-extendable, $\mathcal{H}$ is contained in $\mathfrak{K}_{r}^{r} \subseteq \mathfrak{H}$ as well. 
    This proves that $\mathcal{T}_{r}$ is vertex-extendable with respect to $\mathfrak{H}$. 

    Notice that the argument above actually shows that $\mathcal{T}_{r}$ is degree-stable with respect to the family $\mathfrak{K}_{r}^{r}$. 
    In particular, $\mathcal{T}_{r}$ is edge-stable with respect to $\mathfrak{K}_{r}^{r}$. 
    Since for every member $F \in \mathcal{T}_{r}$, there exists a member $\tilde{F} \in \Delta_{r}$ such that there exists a homomorphism from $\tilde{F}$ to $F$, a standard application of the Hypergraph Removal Lemma (see e.g.~\cite{RS04,NRS06,Gow07}) shows that every $\Delta_{r}$-free $r$-graph on $n$  vertices can be made $\mathcal{T}_{r}$-free by removing $o(n^r)$ edges. 
    This implies that $\Delta_{r}$ is edge-stable with respect to $\mathfrak{K}_{r}^{r}$ as well. 
    Since, by Proposition~\ref{PROP:Tr-vtx-extend} again, the $r$-graph $\mathbb{T}_{r,1} \in \Delta_{r}$ is vertex-extendable with respect to $\mathfrak{K}_{r}^{r}$, it follows from Theorem~\ref{THM:LMR-vtx-extend}~\ref{THM:LMR-vtx-extend-2} that $\mathbb{T}_{r,1}$ is vertex-extendable with respect to $\mathfrak{K}_{r}^{r}$. This completes the proof of Proposition~\ref{THM:hypergraph-Mantel}. 
\end{proof}

\section{Concluding remarks}\label{SEC:remark}
A standard blowup argument (see e.g. the proof of~{\cite[Theorem~1.2]{LRW24a}}), combined with Theorem~\ref{THM:hypergraph-Mantel}, implies that every $\mathcal{T}_{r}$-free $r$-graph $\mathcal{H}$ on $n \ge r$ vertices satisfies $|\mathcal{H}| \le n^{r}/r^r$, with equality holding iff $r \mid n$ and $\mathcal{H} \cong T^{r}(n)$. 

It seems to be an interesting problem to determine all minimal subfamilies $\mathcal{F}$ of $\Delta_{r}$ such that the extremal $\mathcal{F}$-free construction for large $n$ is still given by $T^{r}(n)$. 
Results by Frankl--F\"{u}redi~\cite{FF89} (see also~\cite{NY17tri}) show that $\mathcal{F}$ cannot be $\{\mathbb{T}_{r,1}\}$ if $r \ge 5$. 

Below is a somewhat interesting application of Theorem~\ref{THM:hypergraph-Mantel} in generalized Tur\'{a}n problems (see e.g.~\cite{Erdos62,AS16} for related background). 

A $3$-graph $\mathbb{S}$ is a \textbf{Steiner triple system} ($\mathrm{STS}$ for short) if every pair of vertices in $\mathbb{S}$ is contained in exactly one edge of $\mathbb{S}$.
It follows from the definition that a $k$-vertex $\mathrm{STS}$ (if exists) contains exactly $\binom{k}{2}/3$ edges. 
A well-known result (see e.g.~\cite{Wil03}) states that a $k$-vertex $\mathrm{STS}$ exists iff $k \in 6\mathbb{N} + \{1,3\}$.

Denote by $\mathrm{ex}(n,\mathbb{S},\mathbb{C}_3)$ the maximum number of copies of $\mathcal{S}$ in an $n$-vertex $\mathbb{C}_3$-free $3$-graphs. 

\begin{theorem}\label{THM:STS-T3}
    Suppose that $\mathbb{S}$ is a Steiner triple system on $k \ge 3$ vertices.
    Then there exists $n_0 = n_0(\mathbb{S})$ such that, for $n \ge n_0$, 
    \begin{align*}
        \mathrm{ex}(n,\mathbb{S},\mathbb{C}_3)
        = |T^{k}(n)|. 
    \end{align*}
\end{theorem}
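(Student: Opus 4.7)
The plan is to reduce Theorem~\ref{THM:STS-T3} to an application of Theorem~\ref{THM:hypergraph-Mantel} in the extended form stated at the beginning of Section~\ref{SEC:remark}. The lower bound is immediate: since any two edges of a Steiner triple system share at most one vertex, $\mathbb{S}$ itself is $\mathbb{C}_3$-free, and $\mathcal{T}_3 = \mathbb{C}_3$ is blowup-invariant by Fact~\ref{FACT:Delta-blowup-invariant}; hence the balanced blowup $\mathbb{S}[V_1,\ldots,V_k]$ on $n$ vertices is $\mathbb{C}_3$-free, and each transversal (one vertex per part) induces a copy of $\mathbb{S}$, giving $\mathrm{ex}(n,\mathbb{S},\mathbb{C}_3) \ge \prod_i |V_i| = |T^k(n)|$.

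For the upper bound, given an $n$-vertex $\mathbb{C}_3$-free $3$-graph $\mathcal{H}$, I would introduce the auxiliary $k$-uniform hypergraph $\mathcal{K}(\mathcal{H})$ on $V(\mathcal{H})$ whose edges are the $k$-subsets of $V(\mathcal{H})$ inducing a copy of $\mathbb{S}$, so that the number of copies of $\mathbb{S}$ in $\mathcal{H}$ equals $|\mathcal{K}(\mathcal{H})|$. A Zykov-style symmetrization, mirroring the reductions used in Section~\ref{SEC:proof-hypergraph-mantel} and exploiting the blowup-invariance of $\mathbb{C}_3$, allows one to assume that $\mathcal{H}$ is a blowup $\mathcal{G}[V_1,\ldots,V_m]$ of a $2$-covered $\mathbb{C}_3$-free $3$-graph $\mathcal{G}$. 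By Fact~\ref{FACT:link-triangle-free}~\ref{FACT:link-triangle-free-2}, such $\mathcal{G}$ is an $(m,3,2)$-system, and being $2$-covered it is a genuine Steiner triple system on $m$ vertices; consequently $|\mathcal{K}(\mathcal{H})| = \sum_{T \in \mathcal{K}(\mathcal{G})} \prod_{i \in T}|V_i|$, where $\mathcal{K}(\mathcal{G})$ is the $k$-graph of $\mathbb{S}$-copies in $\mathcal{G}$.

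The key structural claim is that $\mathcal{K}(\mathcal{G})$ is $\mathcal{T}_k$-free. Suppose $A, B, C \in \mathcal{K}(\mathcal{G})$ with $A \subseteq B \cup C$ and $(B \cap C) \setminus A \ne \emptyset$. Because $\mathcal{G}$ is an $\mathrm{STS}$, the $\mathcal{G}$-edge through any pair of vertices is unique; applied inside each of $\mathcal{G}[A], \mathcal{G}[B], \mathcal{G}[C] \cong \mathbb{S}$, this forces every pairwise intersection and the triple intersection $A \cap B \cap C$ to be closed under the third-vertex operation, hence to be itself a sub-Steiner triple system. Writing $x = |A \cap B|$, $y = |A \cap C|$, $a = |A \cap B \cap C|$, and $z = |B \cap C|$, the numbers $x, y, z, a$ all lie in the set of sub-STS orders $\{0,1,3,7,9,13,\ldots\}$ and satisfy $x + y - a = k$ together with $z > a$. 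A short arithmetic check, supplemented where necessary by a standard counting argument ruling out certain sub-STS overlaps inside an STS (for instance, two Fano subplanes of an $\mathrm{STS}(13)$ cannot share exactly one vertex), shows that no admissible tuple exists unless one of $A, B, C$ coincides with another, contradicting the assumption that they form a genuine $\mathcal{T}_k$ configuration.

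With $\mathcal{K}(\mathcal{G})$ shown to be $\mathcal{T}_k$-free, the extended Theorem~\ref{THM:hypergraph-Mantel} applied to $\mathcal{K}(\mathcal{G})$ together with Fact~\ref{FACT:Lagrangian} yields $|\mathcal{K}(\mathcal{H})| \le n^k/k^k$; for $n \ge n_0(\mathbb{S})$ this implies $|\mathcal{K}(\mathcal{H})| \le |T^k(n)|$, and the equality characterization of Theorem~\ref{THM:hypergraph-Mantel} together with the fact that $\mathcal{G}$ is $2$-covered forces $m = k$, $\mathcal{G} \cong \mathbb{S}$, and the parts $V_i$ to be balanced, so $\mathcal{H} \cong \mathbb{S}[V_1,\ldots,V_k]$ is the unique extremal construction. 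The main obstacle will be the arithmetic-plus-counting argument ruling out all $\mathcal{T}_k$ configurations uniformly: for moderate $k$ (such as $7$ or $9$) the admissible sub-STS orders alone preclude any nontrivial configuration, but once $k$ is large enough that patterns such as $(x,y,a) = (7,7,1)$ become numerically feasible (the earliest instance being $k = 13$), one must supply a tailored non-embedding lemma about sub-STS overlaps inside $\mathbb{S}$.
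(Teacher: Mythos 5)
Your overall strategy is exactly the paper's: symmetrize to a blowup of a $2$-covered $\mathbb{C}_3$-free $3$-graph (which is then a Steiner triple system), pass to the auxiliary $k$-graph of $\mathbb{S}$-copies, show it is $\mathcal{T}_k$-free, and invoke Theorem~\ref{THM:hypergraph-Mantel}. The one genuine gap is the step you yourself flag: the "arithmetic-plus-counting argument" for $\mathcal{T}_k$-freeness. Your observation that the pairwise intersections $A\cap B$, $A\cap C$, $B\cap C$ are closed under the third-vertex operation, hence are subsystems, is correct and is the same observation the paper makes; but enumerating admissible sub-STS orders and then patching the surviving cases (such as two Fano subplanes of an $\mathrm{STS}(13)$) with ad hoc non-embedding lemmas does not close uniformly in $k$, as you note. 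The missing ingredient is the classical doubling inequality for Steiner triple systems (this is the content of the Doyen--Wilson reference in the paper): a \emph{proper} subsystem of a $k$-point $\mathrm{STS}$ has at most $(k-1)/2$ points. Granting this, any two distinct edges $A,B$ of the auxiliary $k$-graph satisfy $|A\cap B|\le (k-1)/2$, and then $A\subseteq B\cup C$ with $A,B,C$ distinct gives $k=|A\cap B|+|A\cap C|-|A\cap B\cap C|\le k-1$, a contradiction — no case analysis over the set $\{0,1,3,7,9,13,\ldots\}$ is needed, and your problematic pattern $(x,y,a)=(7,7,1)$ inside an $\mathrm{STS}(13)$ is impossible simply because $13<2\cdot 7+1$.

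Two smaller points. First, in the final step you deduce $|\mathcal{K}(\mathcal{H})|\le n^k/k^k$ and claim this implies $|\mathcal{K}(\mathcal{H})|\le |T^k(n)|$; when $k\nmid n$ the first bound is strictly weaker than the second, so you should instead apply Theorem~\ref{THM:hypergraph-Mantel} (via the Tur\'{a}n-number consequence stated at the start of Section~\ref{SEC:remark}) directly to the $\mathcal{T}_k$-free $k$-graph $\mathcal{K}(\mathcal{H})$ to get $|\mathcal{K}(\mathcal{H})|\le |T^k(n)|$ for large $n$. Second, the symmetrization reduction you describe as "Zykov-style" is not proved in Section~\ref{SEC:proof-hypergraph-mantel}; for counting copies of $\mathbb{S}$ rather than edges it requires the $2$-coveredness of $\mathbb{S}$ and is the content of the external proposition the paper cites, so it should be justified rather than asserted.
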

\begin{proof}[Proof sketch of Theorem~\ref{THM:STS-T3}]
    Let $\mathcal{H}$ be a $\mathbb{C}_3$-free $3$-graph on $n$ vertices that contains the maximum number of copies of $\mathbb{S}$.
    Since $\mathbb{S}$ is $2$-covered and $\mathbb{C}_{3}$ is blowup-invariant, by~{\cite[Proposition~7.1]{CL24}}, we may assume that $\mathcal{H}$ is a blowup of some $2$-covered $3$-graph $\mathcal{S}$. 
    Observe that $\mathcal{S}$ must be an $\mathrm{STS}$ (see Fact~\ref{FACT:link-triangle-free}~\ref{FACT:link-triangle-free-2}). 
    
    Define an auxiliary $k$-graph $\mathcal{G}$ on the same vertex set as $\mathcal{H}$, in which a $k$-set $S \subseteq V(\mathcal{H})$ is an edge in $\mathcal{G}$ iff it spans a copy of $\mathbb{S}$ in $\mathcal{H}$. 
    Note that $\mathcal{G}$ is a blowup of another $k$-graph $\mathcal{G}'$, which arises in the same way from $\mathcal{S}$. 
    
    It can be derived from a result on Steiner triple systems by Doyen--Wilson~{\cite[Section~1]{DW73}} that every pair of edges in $\mathcal{G}'$ share at most $(k-1)/2$ vertices (by showing that the intersection of two copies of $\mathbb{S}$ in $\mathcal{S}$ is also an $\mathrm{STS}$). 
    In particular, $\mathcal{G}'$ is $\mathcal{T}_{k}$-free, and hence, $\mathcal{G}$ is also $\mathcal{T}_{k}$ (by Fact~\ref{FACT:Delta-blowup-invariant}). 
    Then the assertion in Theorem~\ref{THM:STS-T3} follows easily from Theorem~\ref{THM:hypergraph-Mantel}. 
\end{proof}

The proof of Theorem~\ref{THM:hypergraph-Mantel} motivates the following problem on $L$-intersecting families (see~{\cite[Section~9]{FT16}} for a survey). 

Let $r \geq 2$ be an integer and $L \subseteq [r]$ be a subset. An $r$-graph $\mathcal{H}$ is called \textbf{$L$-intersecting} if $|e \cap e'| \in L$ for all distinct edges $e, e' \in \mathcal{H}$.

\begin{problem}\label{PROB:L-intersecting-Lagrangian}
    Determine the maximum value of the Lagrangian of an $L$-intersecting $r$-graph on $n$ vertices. Also, determine the collection of sets $L \subseteq [r]$ such that the maximum value is $1/r^{r}$. 
\end{problem}
\textbf{Remark.} Theorem~\ref{THM:CY24-Lagrangian-density} implies that if $L = [i]$ for some $i < \lceil r/2 \rceil$, then the answer is $1/r^{r}$. 
\section*{Acknowledgements}
The author would like to thank Ting-Wei Chao, Jinghua Deng, Oleg Pikhurko, and Hung-Hsun Hans Yu for related discussions, and Yixiao Zhang for bringing~\cite{CY24} to his attention.
\bibliographystyle{alpha}
\bibliography{HypergraphMantel}
\end{document}